\def\cc{{\mathcal C}}
\def\ff{{\mathcal F}}
\def\mm{{\mathcal M}}
\def\pp{{\mathcal P}}
\def\tt{{\mathcal T}}
\def\ffi{\varphi}
\def\eps{\varepsilon}
\def\dst{\displaystyle}
\DeclareMathOperator{\dist}{dist}
\DeclareMathOperator{\vect}{span}
\def\N{{\mathbb{N}}}
\def\Q{{\mathbb{Q}}}
\def\R{{\mathbb{R}}}
\def\Z{{\mathbb{Z}}}
\newcommand{\norm}[1]{{\left\|{#1}\right\|}}
\newcommand{\ent}[1]{{\left[{#1}\right]}}
\newcommand{\abs}[1]{{\left|{#1}\right|}}
\newcommand{\scal}[1]{{\left\langle{#1}\right\rangle}}
\newtheorem{lemma}{Lemma}[section]
\newtheorem{proposition}[lemma]{Proposition}
\newtheorem{theorem}[lemma]{Theorem}
\theoremstyle{definition}
\newtheorem{conjecture}{Conjecture}
\newtheorem{definition}{Definition}
\newtheorem{remark}{Remark}
\newtheorem{example}{Example}
\date{\today}
\newcounter{rep}
\newcommand{\rep}[1]{
		}
\newcounter{rea}
\newcommand{\rea}[1]{
		}
\newcounter{res}
\begin{document}

\title{Density of the span of powers of a function \`a la M\"untz-Sz\'asz}

\author{Philippe Jaming \& Ilona Simon}

\address{Philippe Jaming
\noindent Address: Univ. Bordeaux, IMB, UMR 5251, F-33400 Talence, France.
CNRS, IMB, UMR 5251, F-33400 Talence, France.}
\email{Philippe.Jaming@gmail.com}

\address{Ilona Simon
\noindent Address: Institute of Mathematics and Informatics, University of P\'ecs, Hungary.}
\email{ilonasimon7@gmail.com}

\begin{abstract}
The aim of this paper is to establish density properties in $L^p$ spaces of the span of powers of functions
$\{\psi^\lambda\,:\lambda\in\Lambda\}$, $\Lambda\subset\N$ in the spirit of the M\"untz-Sz\'asz Theorem.
As density is almost never achieved, we further investigate the density of powers and a modulation of powers
$\{\psi^\lambda,\psi^\lambda e^{i\alpha t}\,:\lambda\in\Lambda\}$. Finally, we 
establish a M\"untz-Sz\'asz Theorem for density of translates of powers of cosines
$\{\cos^\lambda(t-\theta_1),\cos^\lambda(t-\theta_2)\,:\lambda\in\Lambda\}$. Under some arithmetic restrictions
on $\theta_1-\theta_2$, we show that density is equivalent to a M\"untz-Sz\'asz condition on $\Lambda$
and we conjecture that those arithmetic restrictions are not needed.
Some links are also established with the recently introduced concept of 
Heisenberg Uniqueness Pairs.
\end{abstract}

\subjclass{41A10;42C15,65T99}

\keywords{M\"untz-Sz\'asz Theorem, Heisenberg uniqueness pairs}

\maketitle

\section{Introduction}

The aim of this paper is to establish density properties in $L^p$ spaces of the span of powers of a single or a pair
of functions in the spirit of the M\"untz-Sz\'asz Theorem.

Representing a generic function of some function space in terms of a family of simple functions is one of the
main tasks in analysis. For instance, complex analysis deals with functions that can be expressed as power series,
that is, the span of the functions $\{x^k,k\in\N\}$.
Fourier analysis deals with the representation of functions in terms
of the simple functions $\{\cos 2k\pi t,\sin 2k\pi t\}_{k\in\Z}$ or alternatively 
$\{e^{2 ik\pi t}=\bigl(e^{2 i\pi t}\bigr)^k\}_{k\in\Z}$. Exploring the spanning properties (basis, minimal set,...)
of the restricted trigonometric system
$\{e^{2 ik\pi t}\}_{k\in\Lambda}$, $\Lambda\subset\Z$ in various function spaces has lead to a considerable bulk of 
Literature ({\it see e.g.} \cite{Ru} as a starting point). In order to establish good spanning properties of
the restricted trigonometric system, the first step consists in knowing if this system is {\em total} (that is, if
its span is dense) in a given function space.
Our aim here is to set a basic stone for similar properties when the basic brick $e^{2i\pi t}$ is replaced
by some other functions. 

When considering the power functions $\{t^\lambda,\lambda\in\Lambda\}$, the problem dates back to the early 20th century.
This problem leads to one of the most intriguing results, the M\"untz-Sz\'asz Theorem \cite{Mu,Sz} which relates the density of 
powers $\{x^\lambda\,:\ \lambda\in\Lambda\}$ in $\cc([0,1])$ with an arithmetic property of $\Lambda$, 
namely the divergence of the series  $\dst\sum_{\lambda\in\Lambda\setminus\{0\}}\frac{1}{\lambda}$. 
This theorem has been extended in many ways, in particular to $L^p$ spaces, 
{\it see e.g.} \cite{BE,CE,Er,EJ,S} and the nice survey \cite{Al} for more on the subject. We will recall precise statements
needed here in the next section.

The question we are asking here is of the same nature but we want to allow powers of more general functions than
the identity. More precisely, we want to investigate the density of systems of the form $\{\psi^\lambda\,: \ \lambda\in\Lambda\}$
in $L^p([a,b])$ or $\cc([a,b])$ when $\psi\,:[a,b]\to\R$ is a smooth function and $\Lambda$ is a set of integers ($\psi$
may change sign). It is rather easy to notice that such a density can only occur when $\psi$ is monotonic ({\it see}
Proposition \ref{prop:onefunction} below). On the other hand, if $\psi$ has a local extrema then $\psi$
has some symmetry and this symmetry will also occur in the entire closed span of $\{\psi^\lambda\,: \ \lambda\in\Lambda\}$.
Therefore, density can not be achieved for such functions. The question then arises on how to complete
this system in order to obtain density.

One idea is to add translations of $\psi$. For instance, for a given $f\in\cc([0,1])$ (here seen as the space of $1$-periodic functions),
we can consider the space
$$
\mathcal{T}(f)=\vect\{f^n(t-\tau),\ n\in\N,\tau\in[0,1]\}.
$$
As $\tt(\cos 2\pi t)$ is an algebra under pointwise multiplication, then, according to the Stone-Weierstrass Theorem, it is dense in
$\cc([0,1])$. This has been further investigated by Kerman and Weit \cite{KW} who gave a characterization of
the $f$'s for which $\tt(f)$ is dense in $\cc([0,1])$. Further generalizations can be found {\it e.g.} in \cite{RSW}.
We address here a similar question for $f(t)=\cos2\pi t$ and we show that the set of powers and translates can then be substantially
reduced. This should call for more research on density of 
$$
\mathcal{T}_{\Lambda,T}(f)=\vect\{f^\lambda(t-\tau),\ \lambda\in\Lambda,\ \tau\in T\}.
$$

A second option consists in adding modulations, instead of translate. In other words we are now
looking for density criteria for
$$
\mm_{\Lambda,\Omega}(f)=\vect\{f^\lambda(t)e^{i\omega t},\ \lambda\in\Lambda,\ \omega\in \Omega\}.
$$
Here we show that if $\Lambda$ satisfies a M\"untz-Sz\'asz type condition, two modulations suffice when $f$ has a single local maximum.

More precisely, our main results can be stated as follows (the general statement is more precise):

\medskip

\noindent{\bf Theorem.} {\sl Let $\Lambda$ be a set of non-negative integers containing zero and write 
$\Lambda=\{0\}\cup \Lambda_e\cup\Lambda_o$ where $\Lambda_e$ (resp. $\Lambda_o$) are the non-zero even (resp. odd)
integers in $\Lambda$. Let $\theta_1,\theta_2\in\R$ such that $\theta_1-\theta_2$ is an irrational algebraic number
and let $T=\{\theta_1,\theta_2\}$ and $\Omega=\{0,\omega\}$ with $|\omega|<1/2$.
Then the following are equivalent:
\begin{enumerate}
\item $\dst\sum_{\lambda\in\Lambda_e}\frac{1}{\lambda}=+\infty$ and $\dst\sum_{\lambda\in\Lambda_o}\frac{1}{\lambda}=+\infty$;

\item $\mm_{\Lambda,\Omega}\bigl(\cos\pi t\bigr)$ is dense in $L^p([0,1])$, $1<p<+\infty$;

\item $\tt_{\Lambda,T}(\cos 2\pi t)$ is dense in $L^p([0,1])$, $1<p<+\infty$.
\end{enumerate}
Moreover, the result stays true if $L^p([0,1])$ is replaced by $\cc([0,1])$.

For $\mm_{\Lambda,\Omega}$, the function $\cos\pi t$ can be replaced by any $\cc^2$ smooth function $\psi\,:[0,1]\to\R$ such that 
$\psi'$ vanishes at a single point $t_0\in(0,1)$ and $\psi''(t_0)\not=0$.

}

\medskip

We conjecture that the density of $\tt_{\Lambda,T}(\cos 2\pi t)$ is valid as soon as $\theta_1-\theta_2$ is irrational,
while we prove that it is not valid when $\theta_1-\theta_2$ is rational.

\medskip

The remaining of the paper is organized as follows. In the next section, we present some background on the M\"untz-Sz\'asz Theorem.
We then devote a section to our results on modulations while in Section 4 we prove our result concerning density of translates 
of the cosine function. In the last section we conclude by establishing some links with the recently introduced concept of Heisenberg
Uniqueness Pairs.

\section{Background and notations}

\begin{definition}
Let $\Lambda\subset\N:=\{0,1,2,\ldots\}$ and $I=[a,b]$, $a<b$ be a bounded interval. 
We will denote by $\Lambda_e=\Lambda\cap(2\N\setminus\{0\})$ and $\Lambda_o=\{0\}\cup\bigl(\Lambda\cap(2\N+1)\bigr)$.

Let us define an ($I$-MS) sequence in the following way:
\begin{itemize}
\item when either $a=0$ or $b=0$, then we call $\Lambda$ an $I$-M\"untz-Sz\'asz sequence, if $0\in\Lambda$ and
$$
\sum_{\lambda\in\Lambda\setminus\{0\}}\frac{1}{\lambda}=+\infty;
$$

\item when either $a>0$ or $b<0$, then we call $\Lambda$ an $I$-M\"untz-Sz\'asz sequence, if
$$
\sum_{\lambda\in\Lambda\setminus\{0\}}\frac{1}{\lambda}=+\infty;
$$
\item when $a<0<b$, then we call $\Lambda$ an $I$-M\"untz-Sz\'asz 
sequence, if $0\in\Lambda$,
$$
\sum_{\lambda\in\Lambda_e}\frac{1}{\lambda}=+\infty
\quad\mbox{and}\quad
\sum_{\lambda\in\Lambda_o}\frac{1}{\lambda}=+\infty.
$$
\end{itemize}
\end{definition}

We will further use the following notation: for $p\in[1,\infty]$, we write $X_p(I)=L^p(I)$ if $1\leq p<+\infty$
and $X_\infty(I)=\cc(I)$. We then define $p'$ to be the usual dual index, $\dst\frac{1}{p}+\frac{1}{p'}=1$ with the convention that
$1/\infty=0$. Finally, we write $X_p^{\prime}=X_{p'}$.

The classical M\"untz-Sz\'asz Theorem \cite[page 23]{S}, {\it see also} \cite[Section 6]{BE}, states that

\begin{theorem}[M\"untz-Sz\'asz]
Let $\Lambda\subset\N$, $1\leq p\leq+\infty$, $\dst\frac{1}{p}+\frac{1}{p'}=1$ and $I\subset\R$ be a bounded interval. The following conditions are equivalent
\begin{enumerate}
\renewcommand{\theenumi}{\roman{enumi}}
\item\label{MZ1} The set $\{x^\lambda,\lambda\in\Lambda\}$ is total in $X_p(I)$.

\item\label{MZ3} If $f\in X_p^\prime(I)$ is such that $\dst\int_I f(s)s^\lambda\,\mbox{d}s=0$
for every $\lambda\in\Lambda$, then $f=0$.

\item\label{MZ2} $\Lambda$ is an $I$-M\"untz-Sz\'asz sequence.
\end{enumerate}

Moreover, 

--- if $I\subset\R^+$ or $\R^-$ and $\sum_{\lambda\in\Lambda\setminus\{0\}}\frac{1}{\lambda}<+\infty$, then
every function in the closed linear span of $\{x^\lambda,\lambda\in\Lambda\}$ is analytic in the interior of $I$;

--- if $I=[a,b]$ with $a<0<b$ and $\sum_{\lambda\in\Lambda_e}\frac{1}{\lambda}<+\infty$
(resp. $\sum_{\lambda\in\Lambda_o}\frac{1}{\lambda}<+\infty$) then the even (resp. odd)
part of each function in the closed linear span of $\{x^\lambda,\lambda\in\Lambda\}$ is analytic
on $(a,b)\setminus\{0\}$.
\end{theorem}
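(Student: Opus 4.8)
The plan is to dispatch the equivalence (i)$\Leftrightarrow$(ii) by soft duality and to concentrate the work on (ii)$\Leftrightarrow$(iii), whose heart is a single complex-analytic argument. For $1\leq p<+\infty$, the set $\set{x^\lambda\,:\lambda\in\Lambda}$ is total in $X_p(I)=L^p(I)$ if and only if no nonzero functional of $X_p(I)^*$ annihilates the whole family; by the Riesz representation $(L^p)^*=L^{p'}$ such a functional is integration against some $g\in X_{p'}(I)=X_p'(I)$, which is precisely (ii). For $p=+\infty$ the annihilator lives in the space of measures rather than in $L^1$, but the complex-analytic argument below applies verbatim with $\d\mu(s)$ in place of $g(s)\d s$, so all three statements still reduce to the same arithmetic condition. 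Everything therefore comes down to deciding when $\dst\int_I g(s)s^\lambda\d s=0$ for all $\lambda\in\Lambda$ forces $g=0$.

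First I would treat $I=[0,1]$; a general $I\subset\R^+$ or $I\subset\R^-$ follows by an affine change of variable (up to a harmless sign). Given such a $g\in X_{p'}([0,1])$, introduce the holomorphic symbol
$$
F(z)=\int_0^1 g(s)\,s^z\d s=\int_0^1 g(s)\,e^{z\log s}\d s .
$$
H\"older's inequality gives $\abs{F(z)}\leq\norm{g}_{p'}(p\,\Re z+1)^{-1/p}$, so $F$ is bounded and holomorphic on the right half-plane $\set{\Re z>0}$ and vanishes on all of $\Lambda$. Transporting to the unit disc by the Cayley map $w=\dfrac{z-1}{z+1}$, the zeros become $w_\lambda=\dfrac{\lambda-1}{\lambda+1}$ with $1-\abs{w_\lambda}=\dfrac{2}{\lambda+1}$, so the Blaschke sum $\sum_\lambda\bigl(1-\abs{w_\lambda}\bigr)$ diverges exactly when $\sum_{\lambda}\frac1\lambda=+\infty$. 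When $\Lambda$ is a M\"untz-Sz\'asz sequence the Blaschke condition fails, forcing $F\equiv0$; then $\dst\int_0^1 g(s)s^n\d s=0$ for every $n\in\N$, and since polynomials are dense (Weierstrass) we get $g=0$, i.e.\ (ii). For the converse, from a convergent $\sum1/\lambda$ I would build a nonzero bounded holomorphic function on the half-plane vanishing on $\Lambda$ and realize it as the symbol $F$ of a genuine annihilating $g\in X_{p'}$ (equivalently, estimate from below the distance of a missing power $x^\mu$ to the span via the corresponding Blaschke-type product), defeating totality.

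For $I=[a,b]$ with $a<0<b$ the new feature is the sign change of $x$, so I would split $g=g_+ + g_-$ into its even and odd parts. The moments with even $\lambda$ only see $g_+$, and the substitution $u=s^2$ converts them into a M\"untz moment problem in $u$ on $[0,\max(a^2,b^2)]$; since $\lambda=2m$ contributes $\frac1m=\frac2\lambda$, the governing series is $\sum_{\lambda\in\Lambda_e}\frac1\lambda$. Symmetrically the odd moments control $g_-$ through $\sum_{\lambda\in\Lambda_o}\frac1\lambda$, while $0\in\Lambda$ is needed to capture the constant. Combining the two one-sided results reproduces exactly the three-case definition of an $I$-M\"untz-Sz\'asz sequence.

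Finally, the ``moreover'' assertions are the Clarkson--Erd\H{o}s side of the statement: when the relevant series converges, the same Blaschke/Jensen estimates bound the coefficient functionals $g\mapsto c_\lambda$ on the closed span and yield local-uniform convergence of $\sum c_\lambda x^\lambda$ on compact subsets of the interior (away from $0$ in the two-sided case), whence the limit extends holomorphically there; the even/odd splitting localizes this to the even or odd part. I expect the genuine obstacle to be exactly this converse/analyticity direction: the implication ``M\"untz-Sz\'asz $\Rightarrow$ density'' drops out of Blaschke immediately, whereas realizing a prescribed Blaschke product as an honest moment symbol $F=\int g(s)s^{(\cdot)}\d s$ with $g\in X_{p'}$, and upgrading the estimates to local-uniform convergence of the representing series, is where the quantitative work concentrates.
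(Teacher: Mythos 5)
Your proposal is correct in outline, but it takes a genuinely different route from the paper for the simple reason that the paper does not prove this theorem at all: it is stated as background, with the one-sided case attributed to M\"untz--Sz\'asz for $I=[0,1]$ and to Clarkson--Erd\H{o}s and Schwartz for $ab\neq0$ (and to \cite{BE} for the $L^p$ version), and the only argument the paper actually supplies is the two-line reduction you also use at the end --- namely that $(i)\Leftrightarrow(ii)$ is Hahn--Banach duality, and that the case $a<0<b$ follows from the half-line case by extending the annihilator by zero to a symmetric interval and splitting it into even and odd parts (which is exactly your $g=g_++g_-$, $u=s^2$ step, with the even/odd series $\sum_{\Lambda_e}1/\lambda$, $\sum_{\Lambda_o}1/\lambda$ emerging as you describe). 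What you add is a self-contained reconstruction of the classical half-line proof: the moment symbol $F(z)=\int_0^1 g(s)s^z\,\mathrm{d}s$, its boundedness on $\{\Re z>0\}$ via H\"older, the Cayley transfer of the zeros to $w_\lambda=\frac{\lambda-1}{\lambda+1}$, and the Blaschke dichotomy; this is the standard Sz\'asz argument and is sound, and your handling of $p=\infty$ (where the annihilator is a measure rather than an $L^1$ function, so $(i)\Leftrightarrow(ii)$ is not literally Riesz duality) is more careful than the paper's blanket appeal to Hahn--Banach. The trade-off is that the two genuinely hard ingredients --- the converse direction, i.e.\ realizing a Blaschke product as the moment symbol of a nonzero annihilating $g\in X_{p'}$ (or the equivalent Gram-determinant lower bound on $\mathrm{dist}(x^\mu,\overline{\vect}\{x^\lambda\})$), and the Clarkson--Erd\H{o}s analyticity of every element of the closed span when $\sum 1/\lambda<\infty$ --- remain sketches in your write-up, and these are precisely the parts the paper outsources to \cite{CE}, \cite{S} and \cite{BE}; you are right to flag them as the locus of the quantitative work, but as written they are citations in disguise rather than proofs.
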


Of course, the equivalence of \eqref{MZ1} and \eqref{MZ3} is a direct consequence of the Hahn-Banach Theorem.
The classical M\"untz-Sz\'asz Theorem covers only the case $I=[0,1]$ (and thus $I=[a,b]$ with $ab=0$), 
the more general case $I=[a,b]$, $ab\not=0$ is due to Clarkson-Erd\H{o}s and Schwartz. 
The case where $I$ is no longer included in a half-line is an easy consequence of the classical M\"untz-Sz\'asz Theorem by writing 
$f$ in \eqref{MZ3} as a sum of an even and odd function (after extending $f$ by $0$
so that it is defined on a symmetric interval). Also, this theorem is usually stated for density in $\cc(I)$
but the statement is the same for $L^p(I)$ when $\Lambda\subset\N$, {\it see e.g.} \cite[Section 6]{BE}.

Note that when $I$ intersects both $\R_+$ and $\R_-$ the statement can be reformulated in terms of the Fourier transform
that we normalize as
$$
\widehat{f}(\zeta)=\ff[f](\zeta):=\int_\R f(s)e^{-is\zeta}\,\mbox{d}s
$$
if $f\in L^1(\R)$ and extended to $L^2(\R)$ in the usual way.
In this case, if $\zeta_0\in\R$, then \eqref{MZ1},\eqref{MZ3},\eqref{MZ2} are equivalent to

\smallskip
\begin{enumerate}
\renewcommand{\theenumi}{\roman{enumi}}
\setcounter{enumi}{3}
\item {\sl $\dst\frac{\mathrm{d}^\lambda}{\mathrm{d}x^\lambda}\widehat{f}(\zeta_0)=0$, for every $\lambda\in\Lambda$
implies $f=0$.}
\end{enumerate}

\section{Density of powers of a fixed function and modulation}

In this section $I$ will still be a fixed bounded closed interval and $\psi\,:I\to\R$ a $\cc^1$-smooth function
(one may slightly weaken this condition). We will first prove the following result:

\begin{proposition}
\label{prop:onefunction}
Let $a,b\in\R$ and $\psi\,:[a,b]\to\R$ be a $\cc^2$ function such that $\psi'$ and $\psi''$ do not vanish 
simultaneously. Let $p\in[1,+\infty]$. Let $J=\psi([a,b])$ and let $\Lambda\subset\N$.
The following are equivalent:
\begin{enumerate}
\renewcommand{\theenumi}{\roman{enumi}}
\item $\{\psi^\lambda\,:\lambda\in\Lambda\}$ is total in $X_p(a,b)$.

\item $\psi$ is one-to-one and $\Lambda$ is a $J$-M\"untz-Sz\'asz sequence.
\end{enumerate}
\end{proposition}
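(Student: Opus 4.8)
The plan is to reformulate totality through its annihilator (Hahn--Banach) and then to transport the problem to the reference interval $J$ by the substitution $x=\psi(t)$, where the M\"untz--Sz\'asz Theorem recalled above applies. Concretely, for $1\le p<\infty$ totality of $\{\psi^\lambda\}$ in $L^p(a,b)$ is equivalent to the vanishing of every $f\in L^{p'}(a,b)$ with $\int_a^b f\,\psi^\lambda\d t=0$ for all $\lambda\in\Lambda$, while totality in $\cc(a,b)$ is equivalent to the vanishing of every Borel measure on $[a,b]$ annihilating the $\psi^\lambda$. I would treat (ii)$\Rightarrow$(i) first. If $\psi$ is one-to-one then, being continuous on an interval, it is a strictly monotone homeomorphism of $[a,b]$ onto $J=[c,d]$, and the non-degeneracy hypothesis forces $\psi'\neq0$ on $(a,b)$ (an interior zero of $\psi'$ would be a strict extremum, contradicting injectivity), although $\psi'$ may still vanish at $a$ or $b$. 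I prove density in $\cc(a,b)$ first, as it is the strongest case. Given a measure $\nu$ on $[a,b]$ with $\int\psi^\lambda\d\nu=0$ for all $\lambda\in\Lambda$, its push-forward $\mu=\psi_*\nu$ satisfies $\int_J x^\lambda\d\mu=0$; since $\Lambda$ is $J$-MS, $\{x^\lambda\}$ is dense in $\cc(J)$, so $\mu=0$, and because $\psi$ is a homeomorphism $\nu=0$. Density in $L^p(a,b)$, $1\le p<\infty$, then follows since $\cc(a,b)$ is dense in $L^p(a,b)$.

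For the converse I argue by contraposition and distinguish two cases. Suppose first that $\psi$ is not one-to-one, and choose $s<t$ with $\psi(s)=\psi(t)=:v_0$. As $\psi',\psi''$ never vanish together, $\psi$ is constant on no subinterval, so on $[s,t]$ it attains an extreme value $w\neq v_0$ at an interior point; by the intermediate value theorem every value strictly between $w$ and $v_0$ is then taken at least twice on $(s,t)$. I fix a continuous $\eta$ compactly supported in this doubly-covered open interval (in particular $v_0\notin\supp\eta$) and set $f=\psi'\,(\eta\circ\psi)$ on $[s,t]$, extended by zero. Since $\psi(t')\to v_0\notin\supp\eta$ as $t'\to s$ or $t'\to t$, this $f$ is continuous on $[a,b]$ and not identically zero. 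With $\Phi_\lambda$ an antiderivative of $x\mapsto\eta(x)x^\lambda$ one has $f(t')\psi(t')^\lambda=\frac{\mathrm d}{\mathrm dt'}\Phi_\lambda(\psi(t'))$, whence
$$\int_a^b f(t')\psi(t')^\lambda\d t'=\Phi_\lambda(\psi(t))-\Phi_\lambda(\psi(s))=\Phi_\lambda(v_0)-\Phi_\lambda(v_0)=0.$$
Thus $f$ is a nonzero continuous (hence in every $X_{p'}(a,b)$) annihilator of $\{\psi^\lambda\}$, and the system is total in $X_p(a,b)$ for no $p\in[1,\infty]$. This is exactly the symmetry obstruction mentioned in the introduction: functions pulled back from $J$ cannot distinguish the two branches of $\psi$ around an interior extremum.

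Suppose now that $\psi$ is one-to-one but $\Lambda$ is not $J$-MS. By the M\"untz--Sz\'asz Theorem $\{x^\lambda\}$ is not total in $X_p(J)$, so there is a nonzero $g\in X_{p'}(J)$ with $\int_J g\,x^\lambda\d x=0$ for all $\lambda$. Putting $f=(g\circ\psi)\,\psi'$ and changing variables gives $\int_a^b f\,\psi^\lambda\d t=\int_J g\,x^\lambda\d x=0$, and $f\neq0$ because $\psi'\neq0$ on $(a,b)$ and $\psi$ is a bijection there. It remains to place $f$ in the right class: a change of variables gives $\int_a^b|f|^{p'}\d t=\int_J|g|^{p'}\,|\psi'\circ\psi^{-1}|^{\,p'-1}\d x\le\|\psi'\|_\infty^{\,p'-1}\,\|g\|_{p'}^{p'}$ when $1<p'<\infty$, $\int_a^b|f|\d t=\|g\|_1$ when $p'=1$, and $f$ is continuous when $p'=\infty$; in every case $f\in X_{p'}(a,b)$ is a nonzero annihilator, so (i) fails. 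The main obstacle I anticipate is precisely this bookkeeping of classes under the substitution when $\psi'$ degenerates at an endpoint: the inverse substitution has an unbounded Jacobian $(\psi^{-1})'=1/(\psi'\circ\psi^{-1})$ and need not send $L^{p'}(a,b)$ into $L^{p'}(J)$, which is why in (ii)$\Rightarrow$(i) I push forward measures and invoke the $\cc(J)$-form of M\"untz--Sz\'asz rather than pulling an annihilator back, thereby moving the degenerate Jacobian only in the harmless direction.
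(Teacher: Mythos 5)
Your proof is correct, and although it shares the paper's overall skeleton (dualize via Hahn--Banach, transport to $J$ by the substitution $x=\psi(t)$, invoke M\"untz--Sz\'asz, and exhibit an explicit annihilator when $\psi$ is not injective), both key steps are carried out by genuinely different devices. For the non-injective case the paper builds the annihilator from a folding map $\ffi$ with $\psi\circ\ffi=\psi$ across an interior extremum and defines $f$ antisymmetrically on the two branches; you instead write $f\,\psi^\lambda$ as the exact derivative $\frac{\mathrm{d}}{\mathrm{d}t'}\Phi_\lambda\bigl(\psi(t')\bigr)$ between two points with equal $\psi$-values, which avoids constructing $\ffi$ altogether and yields a continuous annihilator in one stroke (the only detail worth adding is why $f=\psi'\,(\eta\circ\psi)\not\equiv 0$: the zeros of $\psi'$ are isolated since $\psi'$ and $\psi''$ never vanish together, so $\psi'$ cannot vanish identically on the nonempty open set $\psi^{-1}(\{\eta\neq0\})\cap(s,t)$). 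For the injective case the paper asserts that $|\psi'|$ is bounded below and transports annihilators in both directions via $\psi_*f=f(\psi^{-1})/\psi'(\psi^{-1})$; your treatment is more careful precisely where that assertion is delicate, namely when $\psi'$ vanishes at an endpoint (e.g.\ $\psi(x)=x^2$ on $[0,1]$, which satisfies the hypotheses and is injective): by pushing forward measures for (ii)$\Rightarrow$(i) and using the one-sided bound $\int_a^b|f|^{p'}\,\mathrm{d}t\leq\norm{\psi'}_\infty^{p'-1}\norm{g}_{p'}^{p'}$ for the converse, you only ever move the possibly degenerate Jacobian in the harmless direction. So your route buys a cleaner annihilator construction and a proof that remains valid when $\psi'$ degenerates at the boundary, at the modest cost of invoking the measure-theoretic ($\cc(J)$) form of M\"untz--Sz\'asz and the reduction from $L^p$- to uniform density.
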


\begin{proof}
Let us first assume that $\psi$ is {\em not} one-to-one. Then $\psi$ has a local extremum at a point $x_0$
in the interior of $[a,b]$. Therefore, there exists $a\leq a'<x_0<b'\leq b$ and a map $\ffi\,:[a',x_0]\to [x_0,b']$
such that $\ffi$ is one-to-one and onto and $\psi\circ\ffi=\psi$ on $[a',x_0]$. Let $f$ be any non-zero $\cc^1$ function
on $[x_0,b']$ and extend $f$ to $[a',x_0]$ by setting
$$
f(x)=-\ffi'(x)f\bigl(\ffi(x)\bigr)
$$
and then extend $f$ further to $[a,b]\setminus[a',b']$ by setting $f(x)=0$. Then $f\in L^{p'}(a,b)$ ($1/p+1/p'=1$) is non-zero and
$$
\int_a^bf(x)\psi^\lambda(x)\,\mbox{d}x=\int_{a'}^{b'}f(x)\psi^\lambda(x)\,\mbox{d}x
=\int_{a'}^{x_0}+\int_{x_0}^{b'}f(x)\psi^\lambda(x)\,\mbox{d}x.
$$
Changing variable $x=\ffi(t)$ in the first integral we obtain
$$
\int_a^bf(x)\psi^\lambda(x)\,\mbox{d}x=
\int_{b'}^{x_0}f\bigl(\ffi(x)\bigr)\psi^\lambda\bigl(\ffi(x)\bigr)\ffi'(x)\,\mbox{d}x
+\int_{x_0}^{b'}f(x)\psi^\lambda(x)\,\mbox{d}x=0.
$$
It follows that $\{\psi^\lambda\,:\lambda\in\Lambda\}$ is {\em not} total in $X_p(a,b)$.

Let us now assume that $\psi$ is one-to-one so that $\psi'$ does not vanish (otherwise, if $\psi'(x_0)=0$
then, by assumption, $\psi''(x_0)\not=0$ so that $\psi'$ changes sign at $x_0$ and $\psi$ would not be one-to-one). 
In particular, $|\psi'|$ is bounded below. For a function $f$ on $(a,b)$ we define the function $\psi_*f$ on $J$
by $\dst \psi_*f(t)=\frac{f\bigl(\psi^{-1}(t)\bigr)}{\psi'\bigl(\psi^{-1}(t)\bigr)}$. Then, as $|\psi'|$ is bounded below,
$f\in X_p^\prime(a,b)$ if and only if $\psi_*f\in X_p^\prime(J)$.

Further, changing variable $t=\psi(x)$ we get
$$
\int_a^bf(x)\psi^\lambda(x)\,\mbox{d}x=\int_J\frac{f\bigl(\psi^{-1}(t)\bigr)}{\psi'\bigl(\psi^{-1}(t)\bigr)} t^\lambda\,\mbox{d}t.
$$
Applying
the M\"untz-Sz\'asz Theorem, the above proposition follows.
\end{proof}

The question now arises on how to modify the set $\{\psi^\lambda,\lambda\in\Lambda\}$ in order to obtain a total set when $\psi$
is not one-to-one. In our opinion, there are two natural ways to do so, if one considers the M\"untz-Sz\'asz theorem as a statement about
the cancellation of the Fourier transform of a compactly supported function in a point.
The first one consists of adding modulations the second one consists of adding translations. The following result deals with modulations
and shows the equivalence $(1)\Leftrightarrow(2)$ of the theorem stated in the introduction.

\begin{theorem}
Let $a,b\in\R$, $\psi$ be a $\cc^2$ function $[a,b]\to\R$ such that
$\psi'$ changes sign in a single point $x_0\in(a,b)$.
Let $\dst-\frac{1}{b-a}<\alpha<\frac{1}{b-a}$ and define $e_\alpha(t)=e^{i\alpha t}$.
Let $\Lambda,\Lambda'\subset\N$ and $p\in(1,+\infty]$. The following are equivalent.
\begin{enumerate}
\renewcommand{\theenumi}{\roman{enumi}}
\item\label{hyp1} $\{\psi^\lambda,\lambda\in\Lambda\}\cup\{\psi^\lambda e_\alpha,\lambda\in\Lambda'\}$ is total in $X_p$.

\item\label{hyp2} $\Lambda$ and $\Lambda'$ are both $\psi([a,b])$-M\"untz-Sz\'asz sequences. 
\end{enumerate}
\end{theorem}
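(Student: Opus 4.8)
The plan is to reduce, via Hahn--Banach, to showing that the only $f\in X_{p'}(a,b)$ annihilating the whole family is $f=0$, and then to transport everything onto the image interval $J=\psi([a,b])$. Write $M=\psi(x_0)$ for the extreme value and split $[a,b]$ at $x_0$ into the two monotone branches $[a,x_0]$ and $[x_0,b]$, on each of which $\psi$ is a diffeomorphism onto a subinterval of $J$; denote by $\psi_-^{-1}$, $\psi_+^{-1}$ the corresponding inverse branches. Changing variables $t=\psi(x)$ on each branch turns the conditions $\int_a^b f\psi^\lambda\,\mathrm dx=0$ ($\lambda\in\Lambda$) and $\int_a^b f\psi^\lambda e^{i\alpha x}\,\mathrm dx=0$ ($\lambda\in\Lambda'$) into moment conditions on $J$ for the two pushforward measures $\mu_0=\psi_*\bigl(f(x)\,\mathrm dx\bigr)$ and $\mu_1=\psi_*\bigl(f(x)e^{i\alpha x}\,\mathrm dx\bigr)$. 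Where densities exist these are $G=g_-+g_+$ and $\widetilde G=g_-e^{i\alpha\psi_-^{-1}}+g_+e^{i\alpha\psi_+^{-1}}$, with $g_\pm(t)=f\bigl(\psi_\pm^{-1}(t)\bigr)/\bigl|\psi'\bigl(\psi_\pm^{-1}(t)\bigr)\bigr|$, so that the hypotheses read $\int_J t^\lambda\,\mathrm d\mu_0=0$ on $\Lambda$ and $\int_J t^\lambda\,\mathrm d\mu_1=0$ on $\Lambda'$.

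For \eqref{hyp2}$\Rightarrow$\eqref{hyp1} I would argue as follows. Since $\Lambda$ and $\Lambda'$ are $J$-M\"untz-Sz\'asz sequences, the $\cc(J)$ (measure) form of the M\"untz-Sz\'asz Theorem on $J$ forces $\mu_0=0$ and $\mu_1=0$. On the part of $J$ covered by a single branch this already gives $g_-=0$; on the part covered by both branches one is left, for a.e.\ $t$, with the system $g_-+g_+=0$, $g_-e^{i\alpha\psi_-^{-1}}+g_+e^{i\alpha\psi_+^{-1}}=0$, whose determinant is $e^{i\alpha\psi_+^{-1}(t)}-e^{i\alpha\psi_-^{-1}(t)}$. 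Because $0<\psi_+^{-1}(t)-\psi_-^{-1}(t)\le b-a$ while $|\alpha|<1/(b-a)$ (and $\alpha\neq0$), the phase $\alpha\bigl(\psi_+^{-1}(t)-\psi_-^{-1}(t)\bigr)$ has modulus in $(0,1)\subset(0,2\pi)$, so this determinant never vanishes and $g_-=g_+=0$. Hence $f=0$ and the family is total. Passing to measures rather than $L^{p'}$ densities is what keeps this step clean: since $\psi'(x_0)=0$ the Jacobian degenerates near the extreme value $M$ and the densities $g_\pm$ need not lie in $X_{p'}(J)$, whereas $\mu_0,\mu_1$ are always finite measures and the measure form of M\"untz-Sz\'asz applies verbatim.

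For the converse \eqref{hyp1}$\Rightarrow$\eqref{hyp2} I would argue by contraposition, producing a nonzero annihilator whenever one of $\Lambda,\Lambda'$ fails to be $J$-M\"untz-Sz\'asz. When $0$ does not lie in the interior of $J$ this is direct: a nonzero M\"untz-Sz\'asz annihilator $H$ of, say, $\{t^\lambda\,:\lambda\in\Lambda'\}$ can be taken on the two-branch part of $J$, and inverting the same $2\times2$ system (choosing $g_+=-g_-$, so that the $\mu_0$-moments vanish automatically while $\widetilde G=g_-\bigl(e^{i\alpha\psi_-^{-1}}-e^{i\alpha\psi_+^{-1}}\bigr)=H$) reconstructs a nonzero $f$; one checks $f\in X_{p'}(a,b)$ using that the weight coming from the degeneracy of $\psi'$ at $x_0$ is integrable, so no singularity survives. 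The symmetric choice $\widetilde G=0$ handles a failure of $\Lambda$.

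The main obstacle is the converse when $0\in\bigl(\min(\psi(a),\psi(b)),M\bigr)$, i.e.\ when $\psi$ changes sign. Then the $J$-M\"untz-Sz\'asz condition is the two-sided (even/odd) one, which is strictly stronger than the condition governing the two-branch subinterval, so the antisymmetric construction above no longer detects every failure. In this regime the single-branch portion of $J$ straddles $0$, $\psi$ is one-to-one there with image crossing $0$, and one must instead build the annihilator from the even (resp.\ odd) part of a Clarkson--Erd\H{o}s--Schwartz annihilator (the second ``moreover'' clause of the M\"untz-Sz\'asz Theorem), carried on that single branch across the zero of $\psi$, and reconcile the modulation factor $e^{i\alpha\psi_-^{-1}}$ with this even/odd decomposition. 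Controlling this interaction---together with the endpoint cases where $0$ is an endpoint of $J$ and the extremal annihilator degenerates to a point mass---is the delicate part, and is where I expect the bound on $\alpha$ and the precise order of vanishing of $\psi'$ at $x_0$ to enter most essentially.
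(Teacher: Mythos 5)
Your reduction to the image interval $J=\psi([a,b])$ and your proof of \eqref{hyp2}$\Rightarrow$\eqref{hyp1} are essentially the paper's: push $f\,\mathrm{d}x$ and $f e_\alpha\,\mathrm{d}x$ forward to $J$, apply M\"untz--Sz\'asz there to conclude $g=\tilde g=0$ (the paper works with the $L^1(J)$ densities rather than measures, which amounts to the same thing), and invert the $2\times 2$ system using $u_-\neq u_+$; your justification $0<|\alpha|\bigl|\psi_+^{-1}(t)-\psi_-^{-1}(t)\bigr|\leq|\alpha|(b-a)<1<2\pi$ is exactly the point the paper leaves implicit (and indeed $\alpha\neq 0$ must be read into the hypothesis, since for $\alpha=0$ the family reduces to $\{\psi^\lambda\}$ and Proposition \ref{prop:onefunction} shows it is never total).

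The genuine gap is in \eqref{hyp1}$\Rightarrow$\eqref{hyp2}: you do not finish it. You declare the case where $0$ lies in the interior of $J$ to be ``the main obstacle'' and only speculate about an even/odd Clarkson--Erd\H{o}s--Schwartz construction, but this obstacle is an artifact of your decision to support the annihilator on the two-branch subinterval. The paper's converse makes no case distinction: if, say, $\Lambda$ is not a $J$-M\"untz-Sz\'asz sequence, one applies M\"untz--Sz\'asz in $L^{\tilde p}(J)$ with $\tilde p=\frac{3p'}{3p'-1}$ to obtain a nonzero $g\in L^{3p'}(J)$ with $\int_J g(y)y^\lambda\,\mathrm{d}y=0$ for all $\lambda\in\Lambda$ --- the failure of the $J$-condition delivers this directly on all of $J$, even/odd structure included --- sets $\tilde g=0$, and solves the same $2\times2$ system for $f$ on each branch. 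The only substantive work is then the integrability of $f$, which you wave away with ``no singularity survives'': the relevant weight $\Phi(t)=|\psi'(\psi_-^{-1}(t))|^{p'-1}\,\bigl|\sin\tfrac{\alpha}{2}\bigl(\psi_+^{-1}(t)-\psi_-^{-1}(t)\bigr)\bigr|^{-p'}$ behaves like $C\bigl(t-\psi(x_0)\bigr)^{-1/2}$, hence lies in $L^{3/2}(J)$, and H\"older against $|g|^{p'}\in L^{3}$ gives $f\in L^{p'}(a,b)$; this is precisely why the annihilator must be produced in $L^{3p'}$ rather than merely in $L^{p'}$, a point your sketch misses. Your instinct that something is delicate in the converse is not baseless, but it sits elsewhere: when $\psi(a)\neq\psi(b)$ the images $J_-$ and $J_+$ differ, and on $J\setminus(J_-\cap J_+)$ only one branch contributes, so $\tilde g=u_\pm g$ is forced there and $g,\tilde g$ cannot be prescribed independently; the paper's explicit formulas implicitly assume both inverse branches are defined at $y$, so the argument as written really covers $\psi(a)=\psi(b)$ (which includes all the examples given). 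Neither your proposal nor the paper's proof settles that configuration, but it is not the even/odd issue you identify, and in any case your converse stops short of a proof.
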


\begin{example}
Typical examples we have in mind are the functions $\psi(t)=\cos \pi t$ and $\psi(t)=1-\cos \pi t$ on $[-1/2,1/2]$ or equivalently $\psi(t)=\sin\pi t$ and $\psi(v)=1-\sin\pi t$ on $[0,1]$.

Further examples are $\psi(t)=t^2$ on $[-1,1]$, $\psi(t)=1-t^2$ on $[0,1]$. A translation and dilation
then gives a density criteria for the family $\{[t(1-t)]^\lambda,[t(1-t)]^\lambda e^{it}\,:\lambda\in\Lambda\}$ in $L^p([0,1])$.
\end{example}

\begin{remark}
The function $e_\alpha$ can be replaced by any function of the form $e^{i\ffi(t)}$ where the real valued function $\ffi$ is chosen 
such that, if $\psi(v)=\psi(v')$ with $v\not=v'$, then $e^{i\bigl(\ffi(v)-\ffi(v')\bigr)}\not=1$.

If one chooses $\ffi$ such that, if $\psi(v)=\psi(v')$ with $v\not=v'$ implies $e^{i\bigl(\ffi(v)-\ffi(v')\bigr)}\not=\pm 1$,
then the same result stays true for the system $\{\psi^\lambda\cos\ffi,\lambda\in\Lambda\}\cup\{\psi^\lambda\sin\ffi,\lambda\in\Lambda'\}$.
We leave the necessary adaptation of the proof below to the reader.
\end{remark}

\begin{proof}  Let $p'$ be given by $\dst\frac{1}{p}+\frac{1}{p'}=1$. 
We will only prove the theorem for $p\in(1,\infty)$ as no change is needed for $p=+\infty$. 

We will use the following notation: set $J=\psi([a,b])$, $J_+=\psi([x_0,b])$ and $J_-=\psi([a,x_0])$
and $\psi_+^{-1}\,:J_+\to[x_0,b]$ be the inverse of $\psi$ on $[x_0,b]$
while $\psi_-^{-1}\,:J_-\to[x_0,b]$ is the inverse of $\psi$ on $[a,x_0]$.
Then
\begin{eqnarray*}
\int_a^b f(x)\psi(x)^\lambda\,\mbox{d}x&=&
\int_a^{x_0} f(x)\psi(x)^\lambda\,\mbox{d}x+\int_{x_0}^b f(x)\psi(x)^\lambda\,\mbox{d}x\\
&=&\int_{J_-}\frac{f\bigl(\psi_-^{-1}(y)\bigr)}{\psi'\bigl(\psi_-^{-1}(y)\bigr)}y^\lambda\,\mbox{d}y
+\int_{J_+}\frac{f\bigl(\psi_+^{-1}(y)\bigr)}{\psi'\bigl(\psi_+^{-1}(y)\bigr)}y^\lambda\,\mbox{d}y\\
&=&\int_J\left(
\mathbf{1}_{J_-}(y)\frac{f\bigl(\psi_-^{-1}(y)\bigr)}{\psi'\bigl(\psi_-^{-1}(y)\bigr)}
+\mathbf{1}_{J_+}(y)\frac{f\bigl(\psi_+^{-1}(y)\bigr)}{\psi'\bigl(\psi_+^{-1}(y)\bigr)}\right)y^\lambda\,\mbox{d}y.
\end{eqnarray*}
But then if we set\footnote{with the obvious abuse of notation when $y\notin J_-$ or $y\notin J_+$.} 
$$
g(y)=\mathbf{1}_{J_-}(y)\frac{f\bigl(\psi_-^{-1}(y)\bigr)}{\psi'\bigl(\psi_-^{-1}(y)\bigr)}
+\mathbf{1}_{J_+}(y)\frac{f\bigl(\psi_+^{-1}(y)\bigr)}{\psi'\bigl(\psi_+^{-1}(y)\bigr)}
$$
we get
$$
\int_a^b f(x)\psi(x)^\lambda\,\mbox{d}x=\int_J g(y)y^\lambda\,\mbox{d}y.
$$
Similarly, if we set 
$$
\tilde g(x)=\mathbf{1}_{J_-}(y)\frac{f\bigl(\psi_-^{-1}(y)\bigr)e^{i\alpha \psi_-^{-1}(y)}}{\psi'\bigl(\psi_-^{-1}(y)\bigr)}
+\mathbf{1}_{J_+}(y)\frac{f\bigl(\psi_+^{-1}(y)\bigr)e^{i\alpha \psi_+^{-1}(y)}}{\psi'\bigl(\psi_+^{-1}(y)\bigr)}
$$
we get
$$
\int_a^b f(x)e^{i\alpha x}\psi(x)^\lambda\,\mbox{d}x=\int_J \tilde g(y)y^\lambda\,\mbox{d}y.
$$

Let us now prove \eqref{hyp2}$\Rightarrow$\eqref{hyp1}. Assume that $0$ is not in the interior of $J$ and that $\Lambda$ and $\Lambda'$ 
are both $J$-M\"untz-Sz\'asz sequences (the proof when $0$ is in the interior of $J$
and $\Lambda_e,\Lambda_o$ and $\Lambda^\prime_e,\Lambda^\prime_o$ are all
M\"untz-Sz\'asz sequences is similar). Notice that if $f\in L^p(a,b)$, then $g,\tilde g\in L^1(J)$.
According to the M\"untz-Sz\'asz Theorem, if
$$
\int_a^b f(x)\psi(x)^\lambda\,\mbox{d}x=\int_a^b f(x)\psi(x)^{\lambda'}e^{i\alpha x}\,\mbox{d}x=0
$$
for every $\lambda\in\Lambda,\lambda'\in\Lambda'$, then $g=\tilde g=0$. But, writing
$$
f_-(y)=\mathbf{1}_{J_-}(y)\frac{f\bigl(\psi_-^{-1}(y)\bigr)}{\psi'\bigl(\psi_-^{-1}(y)\bigr)}
\quad\mbox{and}\quad
f_+(y)=\mathbf{1}_{J_+}(y)\frac{f\bigl(\psi_+^{-1}(y)\bigr)}{\psi'\bigl(\psi_+^{-1}(y)\bigr)}
$$
and $u_\pm=e^{i\alpha\psi_\pm^{-1}(y)}$,
$g=\tilde g=0$ is equivalent to
$$
\left\{\begin{matrix} f_-(y)&+&f_+(y)&=&0\\
u_-f_-(y)&+&u_+f_+(y)&=&0
\end{matrix}\right..
$$
As $u_-\not=u_+$, this implies $f_+=f_-=0$ thus $f=0$.

Conversely, for \eqref{hyp1}$\Rightarrow$\eqref{hyp2}, assume that one of $\Lambda,\Lambda'$ is not a $J$-M\"untz-Sz\'asz sequence. Let $\tilde p=\frac{3p'}{3p'-1}$ so that
$\dst\frac{1}{\tilde p}+\frac{1}{3p'}=1$. Applying the M\"untz-Sz\'asz Theorem in $L^{\tilde p}(J)$,
there exist $g,\tilde g\in L^{3p'}$, one of them non zero
and the other $0$, such that
$$
\int_J g(y)y^\lambda\,\mbox{d}y=\int_J \tilde g(y)y^{\lambda'}\,\mbox{d}y=0
$$
for every $\lambda\in\Lambda,\lambda'\in\Lambda'$. If we find $f\in L^{p'}([a,b])$ such that
the associated $f_\pm$ are solution of 
\begin{equation}
\label{eq:syst}
\left\{\begin{matrix} f_-(y)&+&f_+(y)&=&g\\
u_-f_-(y)&+&u_+f_+(y)&=&\tilde g
\end{matrix}\right.
\end{equation}
then also
$$
\int_a^b f(x)\psi(x)^\lambda\,\mbox{d}x=\int_a^b f(x)\psi(x)^{\lambda'}e^{i\alpha x}\,\mbox{d}x=0.
$$
But, the system \eqref{eq:syst} has as solution
$$
f_+(y)=\frac{u_-g-\tilde g}{u_--u_+}
\quad\mbox{and}\quad
f_-(y)=-\frac{u_+g-\tilde g}{u_--u_+}
$$
that is
$$
\mathbf{1}_{J_+}(y)f\bigl(\psi_+^{-1}(y)\bigr)=\psi'\bigl(\psi_+^{-1}(y)\bigr)
\frac{e^{i\alpha\psi_-^{-1}(y)}g(y)-\tilde g(y)}
{e^{i\alpha\psi_-^{-1}(y)}-e^{i\alpha\psi_+^{-1}(y)}}
$$
and
$$
\mathbf{1}_{J_-}(y)f\bigl(\psi_-^{-1}(y)\bigr)=-\psi'\bigl(\psi_-^{-1}(y)\bigr)
\frac{e^{i\alpha\psi_+^{-1}(y)}g(y)-\tilde g(y)}
{e^{i\alpha\psi_-^{-1}(y)}-e^{i\alpha\psi_+^{-1}(y)}}.
$$
From this, we get
$$
f(x)=\begin{cases}\psi'(x)\frac{e^{i\alpha\psi_+^{-1}\circ\psi(x)}g\circ\psi(x)-\tilde g\circ\psi(x)}
{e^{i\alpha\psi_+^{-1}\circ\psi(x)}-e^{i\alpha x}}
&\mbox{for }x\in[a,x_0]\\
\psi'(x)\frac{e^{i\alpha\psi_-^{-1}\circ\psi(x)}g\circ\psi(x)-\tilde g\circ\psi(x)}
{e^{i\alpha\psi_-^{-1}\circ\psi(x)}-e^{i\alpha x}}
&\mbox{for }x\in[x_0,b]
\end{cases}.
$$
But now, as exactly one of $g,\tilde g$ is zero, $f$ is not the zero function. It remains to prove that
$f\in L^{p'}(a,b)$. For this, define $f_-$ on $[a,x_0]$ and $f_+$ on $[x_0,b]$ by
$$
f_\pm(x)=\psi'(x)\frac{e^{i\alpha\psi_\mp^{-1}\circ\psi(x)}g\circ\psi(x)}
{e^{i\alpha\psi_\mp^{-1}\circ\psi(x)}-e^{i\alpha x}}
$$
and $\tilde f_\pm=f-f_\pm$. It is enough to show that
$f_-,\tilde f_-\in L^{p'}(a,x_0)$ and $f_+,\tilde f_+\in L^{p'}(x_0,b)$.

Next, changing variable $x=\psi_-^{-1}(t)$, we get
\begin{eqnarray*}
\int_a^{x_0}|f_-(x)|^{p'}\,\mbox{d}x
&=&\int_a^{x_0}\abs{\psi'(x)\frac{e^{i\alpha\psi_+^{-1}\circ\psi(x)}g\circ\psi(x)}
{e^{i\alpha\psi_+^{-1}\circ\psi(x)}-e^{i\alpha x}}}^{p'}\,\mbox{d}x\\
&=&\int_{J_-}\
\frac{|\psi'\bigl(\psi_-^{-1}(t)\bigr)|^{p'-1}}{\abs{e^{i\alpha\psi_+^{-1}(t)}-e^{i\alpha\psi_-^{-1}(t)}}^{p'}}|g(t)|^{p'}
\,\mbox{d}t\\
&=&\frac{1}{2^{p'}}\int_{J_-}
\frac{|\psi'\bigl(\psi_-^{-1}(t)\bigr)|^{p'-1}}{\abs{\sin \frac{\alpha}{2}\bigl(\psi_+^{-1}(t)-\psi_-^{-1}(t)\bigr)}^{p'}}
|g(t)|^{p'}\,\mbox{d}t.
\end{eqnarray*}

But now, as $\psi''(x_0)\not=0$, $\psi(x)=\psi(x_0)+\frac{\psi''(x_0)}{2}(x-x_0)^2+o\bigl((x-x_0)^2\bigr)$. From this, one immediately gets that
$$
\Phi(t)=\frac{|\psi'\bigl(\psi_-^{-1}(t)\bigr)|^{p'-1}}{\abs{\sin \frac{\alpha}{2}\bigl(\psi_+^{-1}(t)-\psi_-^{-1}(t)\bigr)}^{p'}}
\approx C\bigl(t-\psi(x_0)\bigr)^{-1/2}
$$
when $t\to \psi(x_0)$ one of the end points of $J_-$. Further, the assumption on $\psi$ implies that
$\Phi$ is $\cc^2$ smooth on $J_-\setminus\{\psi(x_0)\}$. In particular, $\Phi\in L^{3/2}(J_-)$ (say).
Thus, from H\"older's inequality,
$$
\int_a^{x_0}|f_-(x)|^{p'}\,\mbox{d}x\leq \frac{1}{2^{p'}}\norm{\Phi}_{L^{3/2}(J_-)}\norm{g}_{L^{3p'}(J_-)}^{p'}<+\infty.
$$
The proof for $f_+$ and $\tilde f_\pm$ is similar.
\end{proof}

\section{Density of translates of powers of the cosine function}

In this section, functions on $[0,1]$ will be identified with $1$-periodic functions, so that even and odd functions
on $[0,1]$ make sense.
We are interested in the density of translates of powers of the cosine (or sine) function,
$\{\cos^\lambda 2\pi(t-\theta),\lambda\in\Lambda\}$. According to Proposition \ref{prop:onefunction},
this system is never dense in $L^p(0,1)$. Actually, for this function, it is easy to describe the ``orthogonal'':

\begin{lemma}\label{lem:oneshift}
Let $p\in [1,+\infty]$, $\Lambda\subset\N$, $\theta\in[0,1)$ and
$$
\tt_{p,\Lambda,\theta}=\overline{\vect}\{\cos^\lambda 2\pi(t-\theta),\lambda\in\Lambda\}
$$
be the closed subspace of $X_p(0,1)$ spanned by $\{\cos^\lambda 2\pi(t-\theta),\lambda\in\Lambda\}$.
Let 
$$
\tt_{p,\Lambda,\theta}^\perp=\left\{f\in X_p^\prime(0,1)\,: \int_0^1 f(t)\cos^\lambda 2\pi(t-\theta)\,\mbox{d}t=0\quad\forall\lambda\in\Lambda\right\}. 
$$

If $\Lambda$ is a $[-1,1]$-M\"untz-Sz\'asz sequence, then 
$$
\tt_{p,\Lambda,\theta}=\{f\in X_p(0,1)\,: f(\theta+t)-f(\theta-t)=0\mbox{ a.e. on }[0,1]\}
$$
and
$$
\tt_{p,\Lambda,\theta}^\perp=\{f\in X_p^\prime(0,1)\,:f(\theta+t)+f(\theta-t)=0\mbox{ a.e. on }[0,1]\}.
$$
\end{lemma}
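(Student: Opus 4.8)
The plan is to exploit the even/odd symmetry of the cosine around its peak together with the M\"untz-Sz\'asz Theorem, working throughout with functions identified as $1$-periodic. First I would reduce to the case $\theta=0$ by the change of variable $s=t-\theta$, which is a measure-preserving shift on the circle and carries $\cos^\lambda 2\pi(t-\theta)$ to $\cos^\lambda 2\pi s$; thus it suffices to prove the two displayed identities for $\theta=0$, where the relevant symmetry is the reflection $s\mapsto -s$ (equivalently $t\mapsto 1-t$), under which $\cos 2\pi s$ is even. Once $\theta=0$ is settled, conjugating back by the shift gives the general statement verbatim, with the reflection centered at $\theta$.

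For the description of $\tt_{p,\Lambda,\theta}^\perp$, I would characterize when $\int_0^1 f(s)\cos^\lambda 2\pi s\,\mbox{d}s=0$ for all $\lambda\in\Lambda$. Splitting $f=f^e+f^o$ into its even and odd parts relative to $s\mapsto -s$, the odd part integrates to zero against the even function $\cos^\lambda 2\pi s$ automatically, so the condition only constrains $f^e$. The key step is to push $\cos 2\pi s$ forward to the variable $y=\cos 2\pi s\in[-1,1]$: the map $s\mapsto\cos 2\pi s$ folds $[0,1/2]$ onto $[-1,1]$ one-to-one (with $\cos$ monotone there), so after the substitution the moment conditions become $\int_{-1}^{1} G(y)\,y^\lambda\,\mbox{d}y=0$ for all $\lambda\in\Lambda$, where $G$ is the pushforward of $f^e$ (incorporating the Jacobian $|\psi'|^{-1}$ exactly as in the proof of Proposition~\ref{prop:onefunction}). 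Since $\Lambda$ is a $[-1,1]$-M\"untz-Sz\'asz sequence, the M\"untz-Sz\'asz Theorem on $I=[-1,1]$ forces $G=0$, hence $f^e=0$; conversely any $f$ with $f^e=0$ satisfies all the moment conditions. The identity $f^e=0$ is precisely $f(t)+f(-t)=0$, i.e. after un-shifting $f(\theta+t)+f(\theta-t)=0$ a.e., which is the claimed description of $\tt_{p,\Lambda,\theta}^\perp$.

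For $\tt_{p,\Lambda,\theta}$ itself I would argue by biduality. Each $\cos^\lambda 2\pi(t-\theta)$ is symmetric about $\theta$, i.e. lies in the subspace $S=\{f:f(\theta+t)-f(\theta-t)=0\}$, so $\tt_{p,\Lambda,\theta}\subseteq S$; the reverse inclusion is the substance. I would show $S\subseteq\tt_{p,\Lambda,\theta}$ by checking that any continuous linear functional annihilating $\tt_{p,\Lambda,\theta}$ also annihilates $S$. Such a functional is represented by some $f\in X_p^\prime$ lying in $\tt_{p,\Lambda,\theta}^\perp$, which by the previous paragraph means $f$ is antisymmetric about $\theta$; but an antisymmetric functional annihilates every symmetric function since $\int_0^1 f\cdot h$ with $f$ odd and $h$ even (about $\theta$) vanishes. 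By Hahn-Banach this gives $S\subseteq\overline{\vect}\{\cos^\lambda 2\pi(t-\theta)\}=\tt_{p,\Lambda,\theta}$, completing the proof. (For $p=\infty$, where $X_\infty=\cc$, one uses that $S$ is a closed subspace of $\cc(0,1)$ and the same separation argument with measures.)

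The main obstacle I expect is the careful bookkeeping of the folding substitution $y=\cos 2\pi s$ and the resulting Jacobian, precisely so that the transformed function $G$ lands in the correct dual space $X_p^\prime(-1,1)$ to which the M\"untz-Sz\'asz Theorem applies; as in Proposition~\ref{prop:onefunction} one must verify that $|\psi'|$ being bounded below away from the turning points makes the change of variables an isomorphism on the relevant $X_p^\prime$ spaces, and handle the endpoints $s=0,1/2$ where $\psi'$ vanishes, mirroring the $\Phi\in L^{3/2}$ integrability estimate used earlier. The symmetry/antisymmetry algebra and the Hahn-Banach duality are routine once the reduction to $[-1,1]$-moments is in place.
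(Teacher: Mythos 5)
Your proposal is correct and follows essentially the same route as the paper's proof: reduce to $\theta=0$, observe that only the even part of $f$ is constrained, fold via the substitution $u=\cos 2\pi t$ to turn the conditions into moment conditions $\int_{-1}^1 g(u)u^\lambda\,\mathrm{d}u=0$ on $[-1,1]$, check that the pushforward $g$ is integrable, and invoke the M\"untz--Sz\'asz Theorem to get $f(\theta+t)+f(\theta-t)=0$. The Hahn--Banach biduality step you spell out for the description of $\tt_{p,\Lambda,\theta}$ itself is exactly what the paper leaves as ``obvious,'' and your integrability concern is resolved in the paper simply by noting $g\in L^1(-1,1)$ via the reverse change of variables.
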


\begin{proof}[Proof of Lemma \ref{lem:oneshift}] Up to translating by $\theta$, we may assume that $\theta=0$.
We thus want to prove that $\tt_{p,\Lambda,\theta}^\perp$ is the space of odd functions in $X_p^\prime(0,1)$. Once this is established,
it is obvious that $\tt_{p,\Lambda,\theta}$ is the space of even functions in $X_p(0,1)$.

Note that
$$
\int_0^1f(t)\cos^\ell 2\pi t\,\mbox{d}t=\int_{-1/2}^{1/2}f(t)\cos^\ell 2\pi t\,\mbox{d}t
=\int_0^{1/2}\bigl(f(t)+f(-t)\bigr)\cos^\ell 2\pi t\,\mbox{d}t.
$$
We can now change variable $u=\cos 2\pi t$ to get
\begin{equation}
\label{eq:fgt}
\int_0^1f(t)\cos^\ell 2\pi(t)\,\mbox{d}t=\int_{-1}^1g(u)u^\ell\,\mbox{d}u
\end{equation}
where
\begin{equation}
\label{defgtheta}
g(u)=\frac{1}{2\pi\sqrt{1-u^2}}\ent{f\left(\frac{\arccos u}{2\pi}\right)+f\left(-\frac{\arccos u}{2\pi}\right)}.
\end{equation}
Now $g\in L^1(-1,1)$ since
\begin{eqnarray*}
\int_{-1}^1|g(u)|\,\mbox{d}u&\leq&
\frac{1}{2\pi}\int_{-1}^1\abs{f\left(\frac{\arccos u}{2\pi}\right)}+\abs{f\left(-\frac{\arccos u}{2\pi}\right)}
\frac{\,\mbox{d}u}{(1-u^2)^{1/2}}\\
&=&\int_0^{1/2}\bigl(|f(t)|+|f(-t)|\bigr)\,\mbox{d}t<+\infty
\end{eqnarray*}
as $f\in X_p(0,1)\subset L^1(0,1)$. If $\Lambda$ is a $[-1,1]$-M\"untz-Sz\'asz sequence
we deduce that $g=0$ which is equivalent to $f(t)+f(-t)=0$ {\it i.e.} $f$ is odd.
\end{proof}

\begin{lemma}\label{lem:analytic}
Let $p\in [1,+\infty]$, $\Lambda\subset\N$, $\theta\in [0,1)$. Assume that $\Lambda$
is {\em not} a $[-1,1]$-M\"untz-Sz\'asz sequence. 
Let $f\in \tt_{p,\Lambda,\theta}$, that is, $f$ is even with respect to $\theta$,
$f(\theta+t)=f(\theta-t)$ a.e., and assume further that

--- if $\dst\sum_{\lambda\in\Lambda_e}\frac{1}{\lambda}=+\infty$, then $f(\theta+1/2-t)=f(\theta+t)$

--- if $\dst\sum_{\lambda\in\Lambda_o}\frac{1}{\lambda}=+\infty$, then $f(\theta+1/2-t)=-f(\theta+t)$.

Then $f$ is analytic on $[0,1)$ except possibly at two points.
\end{lemma}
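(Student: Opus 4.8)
The plan is to reduce, via the substitution $u=\cos 2\pi(t-\theta)$, to the analyticity half of the Müntz–Szász Theorem on $[-1,1]$. First I would normalise $\theta=0$ by translation, so that $f$ is even and $1$-periodic. Since each generator satisfies $\cos^\lambda 2\pi t=(\cos 2\pi t)^\lambda$, the map $h\mapsto h(\cos 2\pi\,\cdot)$ is, up to the constant $\pi^{-1/p}$, an isometry of $L^p\bigl((-1,1),(1-u^2)^{-1/2}\,\mathrm{d}u\bigr)$ onto the subspace of functions of $X_p(0,1)$ even about $0$, and it carries $\vect\{u^\lambda\}$ onto $\vect\{\cos^\lambda 2\pi t\}$. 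Consequently $f=h(\cos 2\pi\,\cdot)$ for some $h$ in the closed span of $\{u^\lambda:\lambda\in\Lambda\}$; as the weight $(1-u^2)^{-1/2}$ is bounded below by $1$, this closure embeds into the $L^p(-1,1)$-closure, which is exactly the object the Müntz–Szász Theorem controls. Under this dictionary the evenness of $f$ becomes $h(-u)=h(u)$, while the extra symmetry $f(1/2-t)=\eta f(t)$ with $\eta=\pm1$ translates into $h(-u)=\eta\,h(u)$, because $\cos 2\pi(1/2-t)=-\cos 2\pi t$.

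Next I would use the hypothesis that $\Lambda$ is \emph{not} a $[-1,1]$-Müntz–Szász sequence, i.e.\ at least one of $\sum_{\lambda\in\Lambda_e}1/\lambda$, $\sum_{\lambda\in\Lambda_o}1/\lambda$ converges (the case $0\notin\Lambda$ with both divergent is degenerate: the two symmetry relations then force $f\equiv 0$). The role of the imposed symmetry is to make $h$ purely even or purely odd; upon splitting $h$ into its even and odd parts it annihilates the parity component built from monomials whose exponents have divergent reciprocal sum, leaving precisely the component whose exponents $\Lambda_e$ (resp.\ $\Lambda_o$) have convergent reciprocal sum. For that surviving component the Müntz–Szász analyticity statement applies verbatim: the even part is analytic on $(-1,1)\setminus\{0\}$ when $\sum_{\lambda\in\Lambda_e}1/\lambda<+\infty$, and symmetrically for the odd part.

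Then I would upgrade this to analyticity at the central point $u=0$. Writing the surviving component as $\tilde F(u^2)$ (even case) or $u\,\tilde G(u^2)$ (odd case), the functions $\tilde F,\tilde G$ lie in the $L^p(0,1)$-closure of $\vect\{\zeta^\mu\}$ with $\sum 1/\mu<+\infty$, so by Clarkson–Erdős they extend holomorphically to the open unit disc and in particular are analytic at $\zeta=0$; hence $h$ is analytic at $u=0$, and therefore on all of $(-1,1)$. Finally I transplant back: for every $t\notin\{0,1/2\}$ the map $t\mapsto\cos 2\pi t$ is a local analytic diffeomorphism onto a neighbourhood inside $(-1,1)$ (its derivative $-2\pi\sin 2\pi t$ does not vanish), so $f=h(\cos 2\pi\,\cdot)$ inherits analyticity from $h$ there. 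The transplant can only fail at $u=\pm1$, that is at $t=0$ and $t=1/2$ (in general $\theta$ and $\theta+1/2$), which are the two admissible exceptional points.

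The main obstacle is exactly the behaviour at $u=0$: the recalled Müntz–Szász statement only gives analyticity on $(-1,1)\setminus\{0\}$, so the endpoint singularity introduced by the $\zeta=u^2$ transplant must be eliminated, which is why one needs the stronger disc-analyticity of Clarkson–Erdős together with the parity of $h$ forced by the symmetry hypothesis. A secondary, routine point is the passage from the closed span in the weighted space to the one in $L^p(-1,1)$, and the verification that the parity-annihilation step commutes with taking closures; both are immediate once the weight is seen to be bounded below.
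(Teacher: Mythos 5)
Your proposal follows the same route as the paper's proof: normalise $\theta=0$, set $h(x)=f\bigl(\arccos x/2\pi\bigr)$, pass from the weighted closure on $(-1,1)$ to the unweighted one, and invoke the analyticity half of the M\"untz--Sz\'asz/Clarkson--Erd\H{o}s theorem. One place where you are genuinely \emph{more} careful than the paper is the point $u=0$: the theorem as recalled in Section 2 only yields $h$ analytic on $(-1,1)\setminus\{0\}$, which a priori leaves \emph{four} candidate singular points for $f$ (namely $\theta$, $\theta\pm 1/4$ and $\theta+1/2$, since $\cos 2\pi(t-\theta)$ hits $0$ at two points). Your appeal to the full Clarkson--Erd\H{o}s disc-analyticity (via $h(u)=\tilde F(u^2)$ or directly) to remove the singularity at $u=0$ is exactly what is needed to get the count down to the two points $\theta$ and $\theta+1/2$; the paper's ``the result follows'' glosses over this.

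There is, however, a genuine gap at the parity step, which your proposal shares with (and inherits from) the paper. Your pivotal claim is that the imposed symmetry ``annihilates the parity component built from monomials whose exponents have divergent reciprocal sum, leaving precisely the component whose exponents have convergent reciprocal sum.'' That is the correct mechanism, but it is \emph{not} what the stated hypotheses provide. As written, $\sum_{\lambda\in\Lambda_e}1/\lambda=+\infty$ forces $f(\theta+1/2-t)=f(\theta+t)$, i.e.\ $h(-u)=h(u)$, so $h$ is even; but the even part of $h$ is the one spanned by the exponents in $\Lambda_e$, which in this case have \emph{divergent} reciprocal sum, so the surviving component is precisely the uncontrolled one and no analyticity follows. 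Concretely, take $\Lambda=\{0\}\cup 2\N$: it is not a $[-1,1]$-M\"untz--Sz\'asz sequence, every $f(t)=g\bigl(\cos^2 2\pi(t-\theta)\bigr)$ with $g\in\cc([0,1])$ lies in $\tt_{\infty,\Lambda,\theta}$ and satisfies all the stated hypotheses, yet $g$ may be chosen nowhere analytic. So the lemma is false as printed; the two symmetry conditions must be interchanged ($\sum_{\lambda\in\Lambda_e}1/\lambda=+\infty$ should force $f(\theta+1/2-t)=-f(\theta+t)$, killing the even part of $h$, and symmetrically for $\Lambda_o$). The paper's own proof has the same defect: it chooses $F\subset\Lambda_e$ when $h$ is even and then asserts that $\Lambda_e$ fails the M\"untz--Sz\'asz condition, which does not follow from the hypotheses. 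Your write-up in effect proves the corrected statement, but as a proof of the literal one the annihilation step is a non sequitur, and you should flag the needed correction explicitly. A minor additional slip: the evenness of $f$ about $\theta$ does not translate into $h(-u)=h(u)$ --- it is what makes $h$ well defined at all; only the extra symmetry $f(\theta+1/2-t)=\eta f(\theta+t)$ yields $h(-u)=\eta h(u)$.
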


\begin{remark}
Write $c_k(f)$ for the $k$-th Fourier coefficient of $f$ and note that
$f(\theta-t)=f(\theta+t)$ is equivalent to $c_{-k}(f)e^{-2i\pi k\theta}=c_k(f)e^{2i\pi k\theta}$ for all $k$.
On the other hand, $f(\theta+1/2-t)=\pm f(\theta+t)$ is equivalent to
$c_{-k}(f)e^{-2i\pi k\theta}=\pm (-1)^kc_k(f)e^{2i\pi k\theta}$ so that
$c_k=0$ when $k$ is odd --- resp. $c_k=0$ when $k$ is even.
\end{remark}

\begin{proof} Up to translating by $\theta$, we may assume that $\theta=0$ {\it i.e.}, $f$ is even.
Let $f\in\tt_{p,\Lambda,0}$ be even and define $h$ on $[-1,1]$ by
$$
h(x)=f\left(\frac{\arccos x}{2\pi}\right)
$$
so that, when $f(1/2-t)=f(t)$, $h$ is even, while
$f(1/2-t)=-f(t)$ implies that $h$ is odd.

Let $F\subset\Lambda_e$ when $h$ is even (resp. $F\subset\Lambda_o$ when $h$ is odd) be a finite set.

Changing variable $t=\frac{\arccos x}{2\pi}$, we get
\begin{eqnarray*}
\int_{-1/2}^{1/2}\abs{f(t)-\sum_{\lambda\in F}c_\lambda \cos^\lambda 2\pi t}^p\,\mbox{d}t&=&
2\int_0^{1/2}\abs{f(t)-\sum_{\lambda\in F}c_\lambda \cos^\lambda 2\pi t}^p\,\mbox{d}t\\
&=&\frac{1}{\pi}\int_{-1}^1\abs{h(x)-\sum_{\lambda\in F}c_\lambda x^\lambda}^p\,\frac{\mbox{d}x}{\sqrt{1-x^2}}\\
&=&\frac{2}{\pi}\int_{0}^1\abs{h(x)-\sum_{\lambda\in F}c_\lambda x^\lambda}^p\,\frac{\mbox{d}x}{\sqrt{1-x^2}}\\
&\geq&\frac{1}{\pi}\int_{0}^1\abs{ h(x)-\sum_{\lambda\in F}c_\lambda x^\lambda}^p\,\mbox{d}x.
\end{eqnarray*}
so that $h\in\pp_{p,\Lambda_e}$ (resp. $h\in\pp_{p,\Lambda_o}$) where we denote by
$$
\pp_{p,A}=\overline{\vect}\{t^a,a\in A\}
$$
the closed subspace of $X_p(-1,1)$ spanned by $\{x^a,a\in A\}$.

As $\Lambda_e$ (resp. $\Lambda_o$) does not satisfy the M\"untz-Sz\'asz condition, $h$ is
analytic on $(0,1)$ so that $h$ is analytic in $(-1,1)\setminus\{0\}$. As $f(t)=h(\cos 2\pi t)$, the result follows.
\end{proof}

As one system $\{\cos^\lambda 2\pi(t-\theta),\lambda\in\Lambda\}$ is not total in $L^p(0,1)$, we may ask if
adding a second system of this kind improves the situation.
Let us first show that the second system can not be arbitrary:

\begin{lemma}
Let $p\in[1,\infty]$,
$\theta_1,\theta_2\in[0,1)$ and $\Lambda,\Lambda'\subset\N$. Assume that the system 
$$
\{\cos^\lambda 2\pi(t-\theta_1),\lambda\in\Lambda\}\cup\{\cos^{\lambda'} 2\pi(t-\theta_2),\lambda'\in\Lambda'\}
$$
is total in $X_p(0,1)$, then $\theta_1-\theta_2\notin\Q$.
\end{lemma}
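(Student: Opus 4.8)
The plan is to prove the contrapositive: assuming $\theta_1-\theta_2\in\Q$, I will show the system fails to be total by exhibiting a single nonzero continuous linear functional annihilating every element of it. Since the dual of $X_p(0,1)$ is $X_p'(0,1)=X_{p'}(0,1)$, by Hahn--Banach it suffices to produce a nonzero $f\in X_{p'}(0,1)$ with $\int_0^1 f(t)\cos^\lambda 2\pi(t-\theta_1)\,\mathrm{d}t=0$ for all $\lambda\in\Lambda$ and $\int_0^1 f(t)\cos^{\lambda'} 2\pi(t-\theta_2)\,\mathrm{d}t=0$ for all $\lambda'\in\Lambda'$.

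The key is a symmetry observation, in the spirit of Lemma \ref{lem:oneshift}. For each fixed $\theta$ and each $\lambda$, the function $t\mapsto\cos^\lambda 2\pi(t-\theta)$ is \emph{even about $\theta$}, i.e.\ invariant under $t\mapsto 2\theta-t$. Hence if $f$ is \emph{odd about $\theta$}, meaning $f(\theta+s)=-f(\theta-s)$ a.e., then $t\mapsto f(t)\cos^\lambda 2\pi(t-\theta)$ is odd about $\theta$ and, being $1$-periodic, integrates to $0$ over one full period for \emph{every} $\lambda$. Therefore it is enough to construct a nonzero $f\in X_{p'}(0,1)$ that is simultaneously odd about $\theta_1$ and odd about $\theta_2$; such an $f$ annihilates both systems regardless of $\Lambda$ and $\Lambda'$.

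This is exactly where rationality enters. Writing $\theta_1-\theta_2=p/q$ in lowest terms (with $q\geq 1$), I take $f(t)=\sin 2\pi q(t-\theta_1)$, which is $C^\infty$ and $1$-periodic, hence lies in every $X_{p'}(0,1)$, and is nonzero since $q\geq 1$. Oddness about $\theta_1$ is immediate, as $f(\theta_1\pm s)=\pm\sin 2\pi q s$. Oddness about $\theta_2$ follows from the integrality of $q(\theta_1-\theta_2)=p$: indeed $f(\theta_2+s)=\sin(2\pi q s-2\pi p)=\sin 2\pi q s$ and $f(\theta_2-s)=\sin(-2\pi q s-2\pi p)=-\sin 2\pi q s$, so $f(\theta_2+s)=-f(\theta_2-s)$. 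Conceptually, being odd about both points is equivalent to being odd about $\theta_1$ together with periodicity of period $2(\theta_1-\theta_2)$ (since the composition of the two reflections is translation by $2(\theta_1-\theta_2)$), and a nonzero such $1$-periodic function exists precisely when $2(\theta_1-\theta_2)$ is commensurable with $1$, i.e.\ when $\theta_1-\theta_2\in\Q$. This $f$ contradicts totality, forcing $\theta_1-\theta_2\notin\Q$.

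I expect no serious obstacle: the whole argument rests on the even/odd symmetry of the powers of the cosine and on the single well-chosen test function. The only point requiring genuine care is verifying that $\sin 2\pi q(t-\theta_1)$ is truly odd about $\theta_2$, which hinges on $2\pi q(\theta_1-\theta_2)=2\pi p\in 2\pi\Z$ making the phase shift invisible; this is precisely the step that would break down for irrational $\theta_1-\theta_2$, consistent with the conjectured sharpness of the hypothesis.
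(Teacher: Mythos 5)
Your proof is correct and follows essentially the same route as the paper: the paper takes a general continuous odd $1/n$-periodic function $\varphi$ and uses $f(t)=\varphi(t-\theta_2)$, of which your $f(t)=\sin 2\pi q(t-\theta_1)$ is a concrete instance, and in both cases the annihilation of all powers for both shifts comes from oddness of $f$ about each of $\theta_1$ and $\theta_2$ combined with the evenness of $\cos^\lambda 2\pi(t-\theta_j)$ about $\theta_j$. The only cosmetic caveat is that for $p=\infty$ the dual of $\mathcal{C}(0,1)$ is the space of measures rather than $L^1$, but a nonzero $L^1$ (indeed smooth) function still furnishes the required nonzero annihilating functional, so nothing is lost.
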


\begin{proof}
Let us write $\theta_1-\theta_2=\frac{m}{n}\in\Q$. Let $\ffi$ be a continuous, odd and $1/n$-periodic function
and $f(t)=\ffi(t-\theta_2)$. Then, for every $\ell\in\N$, as $f$ is also $1$-periodic,
$$
\int_0^1f(t)\cos^\ell 2\pi(t-\theta_2)\,\mbox{d}t=\int_{-1/2}^{1/2}\ffi(t)\cos^\ell 2\pi t\,\mbox{d}t=0
$$
since $\ffi$ is odd. Next, write $\theta_1=\dst\theta_2+\frac{m}{n}$ then, as $\ffi$ is $1/n$-periodic,
$$
f(t)=\ffi(t-\theta_2)=\ffi(t-\theta_2-m/n)=\ffi(t-\theta_1),
$$
therefore, for each $\ell\in\N$,
$$
\int_0^1f(t)\cos^\ell 2\pi(t-\theta_1)\,\mbox{d}t
=\int_0^1\ffi\left(t-\theta_1\right)\cos^\ell 2\pi \left(t-\theta_1\right)\,\mbox{d}t
=\int_{-1/2}^{1/2}\ffi(t)\cos^\ell 2\pi t\,\mbox{d}t=0,
$$
using $1$-periodicity again.
It follows that $\{\cos^\lambda 2\pi(t-\theta_1),\lambda\in\Lambda\}\cup\{\cos^{\lambda'} 2\pi(t-\theta_2),\lambda'\in\Lambda'\}$
is never total in $X_p(0,1)$.
\end{proof}

We will also need the following lemma which provides a (non-orthogonal) decomposition trigonometric polynomials
into a sum of two trigonometric polynomials with specific parity.
In a sense, this generalizes the decomposition of a function into an even and an odd function.
Unfortunately, it is only valid in full generality for trigonometric polynomials.

\begin{lemma}\label{lem:decomp}
Let $P$ be a $1$-periodic trigonometric polynomial with zero-mean and $\theta_1,\theta_2\in\R$ such that $\theta_1-\theta_2\notin\Q$. 
Then there exists a unique pair $(P_1,P_2)$ of $1$-periodic trigonometric polynomials with zero-mean
such that $P_1(\theta_1-t)=P_1(\theta_1+t)$, $P_2(\theta_2-t)=P_2(\theta_2+t)$ and $P=P_1+P_2$.
\end{lemma}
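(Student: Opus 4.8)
The plan is to work entirely on the Fourier side and solve for the decomposition frequency by frequency. Write $P(t)=\sum_k c_k(P)e^{2i\pi kt}$, a finite sum with $c_0(P)=0$ by the zero-mean hypothesis. As recorded in the Remark following Lemma \ref{lem:analytic}, a $1$-periodic function $Q$ satisfies $Q(\theta-t)=Q(\theta+t)$ if and only if its Fourier coefficients obey $c_{-k}(Q)=c_k(Q)e^{4i\pi k\theta}$ for every $k$. I therefore seek trigonometric polynomials $P_1,P_2$ with $c_0(P_1)=c_0(P_2)=0$ whose coefficients satisfy, for each $k$, the two evenness relations $c_{-k}(P_1)=c_k(P_1)e^{4i\pi k\theta_1}$ and $c_{-k}(P_2)=c_k(P_2)e^{4i\pi k\theta_2}$, together with the matching condition $c_k(P_1)+c_k(P_2)=c_k(P)$ for all $k$.

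Fixing $k\neq 0$ and treating the pair of frequencies $\{k,-k\}$ simultaneously, I substitute the two evenness relations into the matching condition at frequency $-k$. Writing $x=c_k(P_1)$ and $y=c_k(P_2)$, the four conditions at $\pm k$ collapse to the $2\times 2$ linear system
$$x + y = c_k(P), \qquad e^{4i\pi k\theta_1}x + e^{4i\pi k\theta_2}y = c_{-k}(P),$$
after which $c_{-k}(P_1)$ and $c_{-k}(P_2)$ are recovered from $x,y$ via the evenness relations. The determinant of this system is $e^{4i\pi k\theta_2}-e^{4i\pi k\theta_1}$, and the whole content of the lemma lies in its non-vanishing.

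This is the one place where the hypothesis is used, and it is the step I regard as the crux. Indeed $e^{4i\pi k\theta_1}=e^{4i\pi k\theta_2}$ would force $2k(\theta_1-\theta_2)\in\Z$, whence $\theta_1-\theta_2\in\Q$, contrary to assumption; note that only irrationality, not algebraicity, of $\theta_1-\theta_2$ is needed here. Consequently, for every $k\neq 0$ the system has a unique solution $(x,y)$, giving existence and uniqueness of $c_k(P_1),c_k(P_2)$, and then of $c_{-k}(P_1),c_{-k}(P_2)$; this simultaneously delivers existence of the pair and its uniqueness, since the construction is forced coefficient by coefficient.

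It remains to check that the $P_1,P_2$ thus defined are genuine zero-mean trigonometric polynomials, which is routine. When $c_k(P)=c_{-k}(P)=0$ the system yields $x=y=0$, hence all four coefficients at $\pm k$ vanish; therefore the supports of $P_1$ and $P_2$ lie in the finite symmetric set $\{k:c_k(P)\neq 0\text{ or }c_{-k}(P)\neq 0\}$, so $P_1,P_2$ are trigonometric polynomials, and $c_0(P_1)=c_0(P_2)=0$ makes them zero-mean. By construction $P_1$ is even about $\theta_1$, $P_2$ is even about $\theta_2$, and $P_1+P_2=P$, completing the argument. I anticipate no obstacle beyond the determinant computation, the per-frequency solvability being immediate once irrationality guarantees invertibility.
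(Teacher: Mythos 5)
Your proof is correct and follows essentially the same route as the paper's: both reduce the problem, via the Fourier characterization of evenness about $\theta_j$, to the same $2\times2$ linear system in $c_k(P_1),c_k(P_2)$ for each $k\neq 0$, whose determinant $e^{4i\pi k\theta_2}-e^{4i\pi k\theta_1}$ is nonzero precisely because $\theta_1-\theta_2\notin\Q$. The only additions are your explicit verification that the resulting $P_1,P_2$ have finite spectrum and zero mean, which the paper leaves implicit.
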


\begin{proof} By expanding $P,P_1,P_2$ in Fourier series, one sees that the existence of the desired decomposition 
is equivalent to the systems
$$
\left\{\begin{matrix}
c_k(P)&=&&c_k(P_1)&+&&c_k(P_2)\\
c_{-k}(P)&=&e^{4i\pi k\theta_1}&c_k(P_1)&+&e^{4i\pi k\theta_2}&c_k(P_2)
\end{matrix}\right.\quad,\ \forall k\in\Z\setminus\{0\}.
$$
As $\theta_1-\theta_2\notin\Q$,
 the determinant of this system is non zero for every $k\not=0$ and its solutions are given by
\begin{equation}
\label{solsyst}
c_k(P_1)=\frac{c_{-k}(P)-c_k(P)e^{4i\pi k\theta_2}}{e^{4i\pi k\theta_1}-e^{4i\pi k\theta_2}}
\quad,\quad
c_k(P_2)=\frac{c_{-k}(P)-c_k(P)e^{4i\pi k\theta_1}}{e^{4i\pi k\theta_2}-e^{4i\pi k\theta_1}}.
\end{equation}
which gives both existence and uniqueness.
\end{proof}

\begin{remark}
When $\theta_1-\theta_2=\frac{m}{n}\in \Q$, the lemma stays true with the same proof
if we impose $P,P_1,P_2$ to have degree
$<n/2$ for even $n$ and $<n$ for odd $n$.

Note also that the zero mean assumption is only used to guarantee uniqueness as constant functions satisfy
both parities $P_1(\theta_1-t)=P_1(\theta_1+t)$, $P_2(\theta_2-t)=P_2(\theta_2+t)$. Actually, the proof
shows that the $P_1,P_2$'s we obtain have both zero mean. If $P$ has non-zero mean, we apply the lemma
to $P-c_0(P)$, which has zero mean. We may thus write
$P=c_0(P)+P_1+P_2=\bigl( P_1+\lambda c_0(P)\bigr)+\bigl( P_2+(1-\lambda)c_0(P)\bigr)$, $\lambda\in\R$.
Uniqueness would still be guaranteed if we ask for, say, $P_2$ to have zero mean which would imply that we take $\lambda=1$ in
the above decomposition.
\end {remark}

Note that if $P$ is no longer a trigonometric polynomial but a function in $L^1$, the formula \eqref{solsyst}
will in general lead to sequences that do not go to zero, so that they are not sequences of Fourier coefficients of $L^1$
functions.

Before going on with our main subject, let us elaborate a bit on this topic:

\begin{lemma}\label{lem:decom}
 Let $\theta_1,\theta_2\in\R$ such that $\theta:=\theta_1-\theta_2\in\R\setminus\Q$. Let $a>0$ and assume that
$\theta$ is $a$-approximable by rational numbers in the sense that there is a constant $C_\theta>0$ such that the set
$$
\{(m,n)\in\Z\,:\ |m-n\theta|< C_\theta n^{-a}\}
$$
is finite.

Let $s\geq a$ and
$f\in H^s(0,1)$ with mean zero, then there exists a unique pair $f_1,f_2\in L^2(0,1)$ such that $f=f_1+f_2$,
$f_1(\theta_1-t)=f_1(\theta_1+t)$ and $f_2(\theta_2-t)=f_2(\theta_2+t)$.

Moreover, if $s>a+1/2+j$ for some integer $j$, then $f_1,f_2$ are of class $\cc^j$.
\end{lemma}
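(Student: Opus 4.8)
The plan is to define $f_1$ and $f_2$ directly through their Fourier coefficients, using the very formulas \eqref{solsyst} produced by Lemma \ref{lem:decomp} but now with the $L^1$ function $f$ in place of the trigonometric polynomial $P$: for $k\neq 0$ set
\[
c_k(f_1)=\frac{c_{-k}(f)-c_k(f)e^{4i\pi k\theta_2}}{e^{4i\pi k\theta_1}-e^{4i\pi k\theta_2}},\qquad
c_k(f_2)=\frac{c_{-k}(f)-c_k(f)e^{4i\pi k\theta_1}}{e^{4i\pi k\theta_2}-e^{4i\pi k\theta_1}},
\]
and $c_0(f_1)=c_0(f_2)=0$. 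The work then splits into three tasks: proving that these sequences really are the Fourier coefficients of $L^2$ (indeed smoother) functions, verifying that the resulting $f_1,f_2$ have the announced symmetries and sum to $f$, and establishing uniqueness.

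The heart of the matter, and the only place where the Diophantine hypothesis is used, is a lower bound on the denominator. I would first record the exact identity $\abs{e^{4i\pi k\theta_1}-e^{4i\pi k\theta_2}}=\abs{e^{4i\pi k\theta}-1}=2\abs{\sin 2\pi k\theta}$, and then use the elementary inequality $\abs{\sin\pi x}\geq 2\dist(x,\Z)$ with $x=2k\theta$. The finiteness of $\set{(m,n)\in\Z\,:\ \abs{m-n\theta}<C_\theta\abs{n}^{-a}}$ says precisely that $\dist(n\theta,\Z)\geq C_\theta\abs{n}^{-a}$ for all but finitely many $n$; applying this to $n=2k$ yields a constant $C>0$ with $\abs{e^{4i\pi k\theta_1}-e^{4i\pi k\theta_2}}\geq C\abs{k}^{-a}$ for all large $\abs{k}$. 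Consequently $\abs{c_k(f_j)}\leq C^{-1}\abs{k}^a\bigl(\abs{c_k(f)}+\abs{c_{-k}(f)}\bigr)$, so that for any $\sigma\geq 0$
\[
\sum_k(1+\abs{k})^{2\sigma}\abs{c_k(f_j)}^2\lesssim\sum_k(1+\abs{k})^{2(\sigma+a)}\abs{c_k(f)}^2,
\]
which is finite whenever $\sigma+a\leq s$. Taking $\sigma=s-a\geq 0$ shows $f_1,f_2\in H^{s-a}(0,1)\subset L^2(0,1)$.

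It then remains to identify the decomposition and to prove uniqueness. That $c_k(f_1)+c_k(f_2)=c_k(f)$ for every $k$ is immediate from the formulas, and a computation identical to the one behind \eqref{solsyst} gives $c_{-k}(f_j)=e^{4i\pi k\theta_j}c_k(f_j)$; this is exactly the Fourier-side formulation, recalled in this section, of the symmetries $f_1(\theta_1-t)=f_1(\theta_1+t)$ and $f_2(\theta_2-t)=f_2(\theta_2+t)$. For uniqueness I would argue as in Lemma \ref{lem:decomp}: if $f_1+f_2=0$ with the two symmetries, then for each $k\neq 0$ the pair $\bigl(c_k(f_1),c_k(f_2)\bigr)$ solves the homogeneous $2\times 2$ system whose determinant $e^{4i\pi k\theta_1}-e^{4i\pi k\theta_2}$ is nonzero (because $\theta\notin\Q$ forces $2k\theta\notin\Z$), so all coefficients vanish and $f_1=f_2=0$.

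Finally, the regularity assertion follows from the one-dimensional Sobolev embedding $H^\sigma(0,1)\hookrightarrow\cc^j$, valid for $\sigma>j+\tfrac12$ since by Cauchy--Schwarz the $j$-th differentiated Fourier series then converges absolutely. As we have placed $f_1,f_2$ in $H^{s-a}$, the hypothesis $s>a+\tfrac12+j$ gives $s-a>j+\tfrac12$ and hence $f_1,f_2\in\cc^j$. I expect the single genuine obstacle to be the estimate of the second paragraph: the whole argument rests on converting the Diophantine finiteness assumption into the uniform bound $\abs{e^{4i\pi k\theta_1}-e^{4i\pi k\theta_2}}\gtrsim\abs{k}^{-a}$, which is what guarantees that dividing by the denominator costs only $a$ derivatives and thus keeps $f_1,f_2$ square integrable (and smoother when $s$ is larger).
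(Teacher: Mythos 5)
Your proposal is correct and follows essentially the same route as the paper: define $f_1,f_2$ by the formulas \eqref{solsyst}, convert the Diophantine finiteness hypothesis into the lower bound $\abs{e^{4i\pi k\theta_1}-e^{4i\pi k\theta_2}}\gtrsim \abs{k}^{-a}$ for all large $\abs{k}$, and conclude square-summability (resp.\ absolute convergence of the $j$-times differentiated series via Cauchy--Schwarz, which is your Sobolev embedding step). The only cosmetic difference is that you phrase the conclusion as $f_1,f_2\in H^{s-a}$ and invoke the embedding $H^{\sigma}\hookrightarrow\cc^j$ for $\sigma>j+1/2$, while the paper estimates the relevant $\ell^2$ and $\ell^1$ sums directly; you are also somewhat more explicit about verifying the symmetries and uniqueness, which the paper leaves to the reader.
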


\begin{remark} From the Dirichlet Theorem, no irrational number is $1$-approximable. However,
according to Khinchin's theorem that for $a>1$, almost every real number is $a$-approximable.
Further, from the Thue-Siegel-Roth Theorem, every algebraic number is $1+\eps$-approximable for every $\eps>0$.
On the other hand, Liouville numbers are not $a$-approximable for any number. {\it See e.g.} \cite{HS,Sc,Wa} and references therein for
more on the subject.
\end{remark}

\begin{proof} As previously, if the decomposition exists then the Fourier coefficients of $f_1,f_2$
are given by
$$
c_k(f_1)=\frac{c_{-k}(f)-c_k(f)e^{4i\pi k\theta_2}}{e^{4i\pi k\theta_1}-e^{4i\pi k\theta_2}}
\quad,\quad
c_k(f_2)=\frac{c_{-k}(f)-c_k(f)e^{4i\pi k\theta_1}}{e^{4i\pi k\theta_2}-e^{4i\pi k\theta_1}}.
$$
Conversely, if these two sequences are in $\ell^2$ (resp.  if $k^jc_k(f_1),k^jc_k(f_2)\in\ell^1$) then the corresponding
Fourier series define $f_1,f_2$ as functions in $L^2(0,1)$ (resp. in $\cc^j(0,1)$).

Since $f\in H^s$, $(1+|k|)^sc_k(f)\in\ell^2$. On the other hand
$$
|e^{4i\pi k\theta_1}-e^{4i\pi k\theta_2}|=2|\sin \pi k 2\theta|=2|\sin \pi\dist (2k\theta,\Z)|
\geq 4\dist (2k\theta,\Z).
$$
But,
$\dist (2k\theta,\Z)>2^{-a}C_\theta k^{-a}$ for all but finitely many $k$'s so that 
$|e^{4i\pi k\theta_1}-e^{4i\pi k\theta_2}|>2^{2-a}C_\theta |k|^{-a}$ for all but finitely many $k$'s,
thus for all $|k|\geq k_\theta$ for some $k_\theta\in\N$.

Now, if $f\in H^s$, for $|k|\geq k_\theta$,
$$
\frac{|c_{k}(f)|}{|e^{4i\pi k\theta_1}-e^{4i\pi k\theta_2}|}\leq \frac{2^{a-2}}{C_\theta} |k|^a |c_{k}(f)|\in \ell^2(\Z)
$$
if $s\geq a$.
Further
\begin{eqnarray*}
\sum_{|k|\geq k_\theta}\frac{|k|^j|c_{k}(f)|}{|e^{4i\pi k\theta_1}-e^{4i\pi k\theta_2}|}
&\leq&\sum_{|k|\geq k_\theta}\frac{|k|^s|c_{k}(f)|}{2^{2-a}C_\theta |k|^{s-j-a}}\\
&\leq&\frac{2^{a-2}}{C_\theta}
\left(\sum_{|k|\geq k_\theta}\frac{1}{|k|^{2(s-j-a)}}\right)^{1/2}\left(\sum_{|k|\geq k_\theta}|k|^{2s}|c_{k}(f)|^2\right)^{1/2}
\end{eqnarray*}
which is finite as soon as $s>j+a+1/2$. The result follows immediately.
\end{proof}

\begin{remark} 
Note that if $f$ is such that $c_k(f)=0$ when $k$ is odd --- resp. $c_k(f)=0$ when $k$ is even---
then the same is true for $f_1,f_2$. Thus, for $j=1,2$,
$f_j(\theta+1/2-t)=f_j(\theta+t)$ ---resp. $f_j(\theta+1/2-t)=-f_j(\theta+t)$.
\end{remark}

We are now in position to prove the following result:

\begin{theorem}\label{th:mz2cos}
Let $p\in[1,+\infty]$.
Let $\theta_1,\theta_2\in[0,1)$ be such that $\theta_1-\theta_2\notin\Q$ and $\Lambda,\Lambda'\subset\N$. 
If $\Lambda,\Lambda'$ are $[-1,1]$-M\"untz-Sz\'asz sequences, then
the system $\{\cos^\lambda 2\pi(t-\theta_1),\lambda\in\Lambda\}\cup\{\cos^{\lambda'} 2\pi(t-\theta_2),\lambda'\in\Lambda'\}$
is total in $X_p(0,1)$.
\end{theorem}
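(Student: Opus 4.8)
The plan is to prove totality by duality: by the Hahn--Banach theorem, the system is total in $X_p(0,1)$ if and only if the only $f\in X_p^\prime(0,1)$ orthogonal to every member of the system is $f=0$. So I would fix such an $f$, meaning $\int_0^1 f(t)\cos^\lambda 2\pi(t-\theta_1)\d t=0$ for all $\lambda\in\Lambda$ and $\int_0^1 f(t)\cos^{\lambda'} 2\pi(t-\theta_2)\d t=0$ for all $\lambda'\in\Lambda'$, and show it vanishes.

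First I would observe that the heavy lifting is already done by Lemma \ref{lem:oneshift}. Since $\Lambda$ is a $[-1,1]$-M\"untz-Sz\'asz sequence, orthogonality to $\{\cos^\lambda 2\pi(t-\theta_1):\lambda\in\Lambda\}$ is precisely the condition that $f$ be odd about $\theta_1$, that is $f(\theta_1+t)+f(\theta_1-t)=0$ a.e. Applying the same lemma at $\theta_2$ (using that $\Lambda'$ is also a $[-1,1]$-M\"untz-Sz\'asz sequence), orthogonality to the second family forces $f$ to be odd about $\theta_2$ as well. Thus the whole problem collapses to the purely harmonic statement: a function $f\in X_p^\prime(0,1)\subset L^1(0,1)$ that is odd about two centers $\theta_1,\theta_2$ with $\theta_1-\theta_2\notin\Q$ must vanish.

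To finish I would pass to the Fourier coefficients $c_k=c_k(f)$, which are well defined since $f\in L^1(0,1)$. By the same computation as in the remark following Lemma \ref{lem:oneshift} (with the sign reversed to account for oddness rather than evenness), oddness about $\theta_j$ is equivalent to $c_{-k}=-c_k e^{4i\pi k\theta_j}$ for every $k$. Writing this for $j=1$ and $j=2$ and subtracting yields $c_k\bigl(e^{4i\pi k\theta_1}-e^{4i\pi k\theta_2}\bigr)=0$ for all $k$. Because $\theta_1-\theta_2\notin\Q$, the factor $e^{4i\pi k\theta_1}-e^{4i\pi k\theta_2}$ is nonzero for every $k\neq0$ (this is exactly the nonvanishing-determinant condition already exploited in Lemma \ref{lem:decomp}), so $c_k=0$ for all $k\neq0$. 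The mean term $c_0$ is disposed of by orthogonality to the constant function $\cos^0\equiv1$, which belongs to the system because $0\in\Lambda$. Hence every Fourier coefficient of $f$ vanishes and $f=0$.

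I do not expect a genuine analytic obstacle for this direction: once Lemma \ref{lem:oneshift} converts the M\"untz-Sz\'asz orthogonality conditions into symmetry conditions, what remains is a short Fourier computation. The only points needing care are checking that both reductions via Lemma \ref{lem:oneshift} apply simultaneously (both $\Lambda$ and $\Lambda'$ must be $[-1,1]$-M\"untz-Sz\'asz, which is precisely the hypothesis) and correctly handling the $k=0$ coefficient via $0\in\Lambda$. The real difficulties of this circle of ideas --- the converse direction and the behavior when $\theta_1-\theta_2$ is rational --- do not enter this statement.
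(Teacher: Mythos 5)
Your proof is correct and is essentially the paper's own ``indirect proof'': Hahn--Banach duality, Lemma \ref{lem:oneshift} applied at each of $\theta_1,\theta_2$ to convert orthogonality into oddness about two centers, and then the nonvanishing of the $2\times 2$ Fourier-coefficient determinant (equivalently, of $e^{4i\pi k\theta_1}-e^{4i\pi k\theta_2}$ for $k\neq 0$) forced by $\theta_1-\theta_2\notin\Q$. The paper also records a second, constructive proof via the decomposition of trigonometric polynomials (Lemma \ref{lem:decomp}), but your route coincides with the one the authors actually favor for illustrating their conjecture.
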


We will give two proofs of this theorem. The first one is ``constructive'' while the second one relies on the Hahn-Banach theorem.
The advantage of the second one is that it is more illustrative for our conjecture.

\begin{proof}[Direct proof] Let $f\in X_p(0,1)$ and $\eps>0$. There exists a trigonometric polynomial
such that $\norm{f-P}_p<\eps$ (such polynomials can be given explicitly via Fej\'er sums).
Write $P=P_1+P_2$ with $P_1,P_2$ given by Lemma \ref{lem:decomp} (those are again explicit).

Finally, as $\Lambda,\Lambda'$ are $[-1,1]$-M\"untz-Sz\'asz sequences, $P_1\in\tt_{\infty,\Lambda,\theta_1}$ and 
$P_2\in\tt_{\infty,\Lambda',\theta_2}$.
Therefore, there exists $Q_1,Q_2$ two polynomials such that, if we set $\pi_j=Q_j\bigl(\cos 2\pi(t-\theta_j)\bigr)$,
then $\norm{P_j-\pi_j}_p<\eps$ ($Q_1,Q_2$ can be explicitly given with the help of the constructive proofs of the M\"untz-Sz\'asz
theorem). But then 
$$
\norm{f-\pi_1-\pi_2}_p\leq \norm{f-P}_p+\norm{P_1-\pi_1}_p+\norm{P_2-\pi_2}_p<3\eps
$$
as expected.
\end{proof}

\begin{proof}[Indirect proof] If $\Lambda,\Lambda'$ are $[-1,1]$-M\"untz-Sz\'asz sequences and 
$f\in (\tt_{p,\Lambda,\theta_1}+\tt_{p,\Lambda',\theta_2})^\perp$ then 
$f\in \tt_{p,\Lambda,\theta_1}^\perp\cap\tt_{p,\Lambda',\theta_2}^\perp$.
Lemma \ref{lem:oneshift} then implies that
$$
\left\{\begin{matrix}f(\theta_1+t)&+&f(\theta_1-t)&=&0\\
f(\theta_2+t)&+&f(\theta_2-t)&=&0
\end{matrix}\right..
$$
This implies that $f=0$ ({\it see e.g.} \cite{Sj,Le}). For sake of completeness, here is a simple proof:
looking at Fourier coefficients, this system is equivalent to
$$
\left\{\begin{matrix}
e^{2i\pi k\theta_1}c_k(f)&+&e^{-2i\pi ki\theta_1}c_{-k}(f)&=&0\\
e^{2i\pi k\theta_2}c_k(f)&+&e^{-2i\pi ki\theta_2}c_{-k}(f)&=&0
\end{matrix}\right.\qquad,\forall k\in\Z.
$$
When $k=0$ we get $c_0(f)=0$. For $k\not=0$,
the system has determinant $2i\sin 2k\pi(\theta_1-\theta_2)\not=0$ since $\theta_1-\theta_2\notin\Q$.
Therefore $c_k(f)=0$ for every $k\in\Z$ and thus $f=0$.
\end{proof}

We conjecture that the reverse of this theorem is true as well:

\begin{conjecture}
Let $p\in[1,+\infty]$.
Let $\theta_1,\theta_2\in[0,1)$ be such that $\theta_1-\theta_2\notin\Q$ and $\Lambda,\Lambda'\subset\N$. 
If the system $\{\cos^\lambda 2\pi(t-\theta_1)\,:\lambda\in\Lambda\}\cup\{\cos^{\lambda'} 2\pi(t-\theta_2)\,:\lambda'\in\Lambda'\}$
is total in $X_p(0,1)$, then
$\Lambda,\Lambda'$ are $[-1,1]$-M\"untz-Sz\'asz sequences.
\end{conjecture}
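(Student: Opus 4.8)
The plan is to prove the contrapositive: if one of $\Lambda,\Lambda'$ is not a $[-1,1]$-M\"untz-Sz\'asz sequence, I would produce a nonzero $f\in\tt_{p,\Lambda,\theta_1}^\perp\cap\tt_{p,\Lambda',\theta_2}^\perp$, which by the Hahn-Banach reasoning of the indirect proof of Theorem~\ref{th:mz2cos} shows the system is not total. By symmetry, suppose $\Lambda$ fails; the substantive way this happens is that one of the series $\sum_{\lambda\in\Lambda_e}1/\lambda$, $\sum_{\lambda\in\Lambda_o}1/\lambda$ converges (say the even one). Recall from the change of variables in Lemma~\ref{lem:oneshift} that $f$ annihilates $\{\cos^\lambda2\pi(t-\theta_1):\lambda\in\Lambda\}$ exactly when the function $g$ on $[-1,1]$ obtained from the even part of $f$ about $\theta_1$ satisfies $\dst\int_{-1}^1 g(u)u^\lambda\d u=0$ for all $\lambda\in\Lambda$. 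Since $\sum_{\lambda\in\Lambda_e}1/\lambda<+\infty$, the converse M\"untz-Sz\'asz Theorem provides a nonzero \emph{even} (in $u$) such $g$; as the relevant span has infinite codimension in this regime, $g$ may be taken $\cc^\infty$, supported inside $(-1,1)$, and (by imposing one further linear condition) with $\dst\int_{-1}^1g=0$. Lifting $g$ through $u=\cos2\pi(t-\theta_1)$ gives an even-about-$\theta_1$ function $h$ with $h(\theta_1+t)=\pi|\sin2\pi t|\,g(\cos2\pi t)$; since $g$ vanishes near $\pm1$, $h$ vanishes near $\theta_1$ and $\theta_1+1/2$, so $h$ is $\cc^\infty$, has zero mean, and has rapidly decreasing Fourier coefficients.

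Next I would look for $f$ that is \emph{odd} about $\theta_2$ and whose even part about $\theta_1$ is $h$. Oddness about $\theta_2$ forces $f$ orthogonal to every $\cos^{\lambda'}2\pi(t-\theta_2)$, so $f\in\tt_{p,\Lambda',\theta_2}^\perp$ for \emph{any} $\Lambda'$, while the choice of $h$ gives $f\in\tt_{p,\Lambda,\theta_1}^\perp$. Setting $\theta=\theta_1-\theta_2$ and comparing Fourier coefficients, these two demands reduce to the single relation $c_k(f)\bigl(1-e^{-4i\pi k\theta}\bigr)=2c_k(h)$, solved by
$$
c_k(f)=\frac{2c_k(h)}{1-e^{-4i\pi k\theta}}\qquad(k\neq0),\qquad c_0(f)=0,
$$
which is legitimate because $\theta\notin\Q$ and $c_0(h)=0$; one verifies these coefficients are automatically compatible with oddness about $\theta_2$. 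This is precisely the division by the small divisor $1-e^{-4i\pi k\theta}$, of modulus $2|\sin2\pi k\theta|$, already met in Lemma~\ref{lem:decom}. The resulting $f$ is a nonzero joint annihilator as soon as it lies in $X_{p'}(0,1)$.

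The crux --- and the reason only a conjecture is stated --- is to verify $f\in X_{p'}$, which is exactly the Diophantine obstacle of Lemma~\ref{lem:decom}: the size of $1/(1-e^{-4i\pi k\theta})$ is governed by lower bounds on $\dist(2k\theta,\Z)$, i.e. by the rational-approximation properties of $\theta=\theta_1-\theta_2$. Because $h$ was made $\cc^\infty$, the coefficients $c_k(h)$ decay faster than any power of $|k|$, so the quotient returns an element of every relevant sequence space (hence $f\in X_{p'}$ for all $p$) whenever $\theta$ is $a$-approximable for some finite $a$; by the Thue-Siegel-Roth Theorem this holds for every irrational algebraic $\theta$, and by Khinchin's Theorem for almost every $\theta$, which recovers the equivalence announced in the introduction. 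The genuine enemy is the Liouville numbers: for these $\dist(2k\theta,\Z)$ decays faster than any polynomial along the convergent denominators, and since a smooth compactly supported $h$ can have at best super-polynomial --- not exponential --- coefficient decay, no admissible annihilator compensates the divisor uniformly in $k$. Closing the gap would require either coefficients of $h$ decaying as fast as the worst divisor, which is tied circularly to $\theta$ itself, or an argument that avoids Fourier inversion altogether; supplying such an argument is the heart of the matter and the content the conjecture leaves open.
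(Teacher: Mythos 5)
This statement is labelled a \emph{conjecture} in the paper: the authors give no proof of it, only the partial theorem that follows (which assumes $\Lambda=\Lambda'$ and that $\theta_1-\theta_2$ is $a$-approximable). You do not prove it either, and you say so; so the honest verdict is that your text is a correct diagnosis of the obstruction rather than a proof. That said, your route differs instructively from the paper's partial result. The paper argues by contradiction at the level of the \emph{span}: it takes a smooth, zero-mean, non-analytic $f$ supported on the relevant parity class, splits $f=f_1+f_2$ with $f_j$ even about $\theta_j$ via Lemma \ref{lem:decom} (this is where $a$-approximability enters), and uses Lemma \ref{lem:analytic} to conclude each $f_j$, hence $f$, is analytic off finitely many points --- a contradiction. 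You instead work on the \emph{annihilator} side, building an explicit nonzero $f\in\tt_{p,\Lambda,\theta_1}^\perp\cap\tt_{p,\Lambda',\theta_2}^\perp$ that is odd about $\theta_2$ and has prescribed even part $h$ about $\theta_1$; your Fourier-coefficient relation $c_k(f)=2c_k(h)/(1-e^{-4i\pi k\theta})$ and its compatibility with oddness about $\theta_2$ check out. If completed, your construction would have the advantage of treating distinct $\Lambda\neq\Lambda'$ (only $\Lambda$ needs to fail the M\"untz--Sz\'asz condition, since oddness about $\theta_2$ kills all powers of $\cos 2\pi(t-\theta_2)$ for free), whereas the paper's partial theorem needs the same $\Lambda$ on both translates. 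Both approaches collide with the same small divisor $\abs{\sin 2\pi k\theta}$, and you correctly identify Liouville-type $\theta$ as the genuine enemy.

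Two points in your argument need repair even in the $a$-approximable regime. First, the assertion that the annihilating $g$ on $[-1,1]$ ``may be taken $\cc^\infty$, supported inside $(-1,1)$'' because ``the relevant span has infinite codimension'' is not a proof: infinite codimension of a closed subspace does not imply that its annihilator meets $\cc^\infty_c$. You need an actual construction here (for instance via the Clarkson--Erd\H{o}s--Schwartz description of the closed span, or by exhibiting one $L^{p'}$ annihilator and correcting a smooth bump by finitely many moment conditions --- but the moment conditions are infinitely many, so this requires care). Without a smooth, compactly supported, zero-mean $g$, the rapid decay of $c_k(h)$ that your whole small-divisor analysis rests on is not available. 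Second, you dismiss as non-substantive the case where the M\"untz--Sz\'asz condition fails only because $0\notin\Lambda$; for $X_\infty=\cc(0,1)$ this case does produce non-totality (constants are missed) and should be handled separately. Neither point is fatal to the spirit of your plan, but as written the argument does not establish the conjecture even for algebraic $\theta_1-\theta_2$, and of course the general irrational case remains exactly as open as the paper leaves it.
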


We can now prove the following partial version of the conjecture:

\begin{theorem}
Let $p\in[1,+\infty]$ and $\Lambda\subset\N$.
Let $\theta_1,\theta_2\in[0,1)$ be such that $\theta_1-\theta_2$ is irrational and $a$-approximable for some $a>0$. 
If the system $\{\cos^{\lambda} 2\pi(t-\theta_1),\cos^\lambda 2\pi(t-\theta_2)\,:\lambda\in\Lambda\}$
is total in $X_p(0,1)$, then
$\Lambda$ is a $[-1,1]$-M\"untz-Sz\'asz sequence.
\end{theorem}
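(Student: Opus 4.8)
The plan is to prove the contrapositive: assuming $\Lambda$ is \emph{not} a $[-1,1]$-Müntz-Szász sequence I would exhibit a nonzero $f\in X_{p'}(0,1)$ orthogonal to every $\cos^\lambda2\pi(t-\theta_1)$ and every $\cos^\lambda2\pi(t-\theta_2)$, $\lambda\in\Lambda$; by duality this shows the system is not total. The heart of the matter is the case where one of the two series converges, say $\sum_{\lambda\in\Lambda_e}1/\lambda<+\infty$ (the case $\sum_{\lambda\in\Lambda_o}1/\lambda<+\infty$ being symmetric). The remaining way of failing, $0\notin\Lambda$ with both series divergent, is peripheral and would be handled by a separate, more elementary argument (it is there that the distinction between $\cc$ and $L^p$ is felt).

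First I would look for an $f$ that is \emph{odd} about $\theta_1$, i.e. $f(\theta_1+t)=-f(\theta_1-t)$. Then the even part of $f$ about $\theta_1$ vanishes and, together with $\int_0^1f=0$, this makes $f$ automatically orthogonal to the whole $\theta_1$-family for every $\lambda\in\Lambda$ (including $\lambda=0$). It then only remains to force orthogonality to the $\theta_2$-family. Writing $E_2$ for the even part of $f$ about $\theta_2$ and changing variables $u=\cos2\pi(t-\theta_2)$ exactly as in the proof of Lemma~\ref{lem:oneshift}, orthogonality to $\cos^\lambda2\pi(t-\theta_2)$ becomes $\int_{-1}^1 g(u)\,u^\lambda\,\d u=0$ with $g(u)=\dfrac{1}{\pi\sqrt{1-u^2}}E_2\!\left(\theta_2+\dfrac{\arccos u}{2\pi}\right)$. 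Thus everything reduces to manufacturing a suitable $E_2$.

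The key step is to build a nonzero, even-about-$\theta_2$, zero-mean function $E_2$, \emph{as smooth as needed}, whose transform $g$ annihilates $\{u^\lambda:\lambda\in\Lambda\}$. I would set $E_2(t)=h\bigl(\cos2\pi(t-\theta_2)\bigr)$ with $h$ an \emph{even} $\cc^N$ function on $[-1,1]$, so that $E_2$ is automatically even about $\theta_2$, $1$-periodic and as smooth as $h$. Evenness of $h$ makes the conditions for odd $\lambda$ hold for free, and with $v=u^2$ the remaining conditions read $\int_0^1 k(v)\,v^\mu\,\dfrac{\d v}{\sqrt{v(1-v)}}=0$ for $\mu\in\{0\}\cup\tfrac12\Lambda_e$, where $h(u)=k(u^2)$ (the value $\mu=0$ encoding the zero-mean normalization). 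Since $\sum_{\mu\neq0}1/\mu=\tfrac12\sum_{\lambda\in\Lambda_e}1/\lambda<+\infty$, the Clarkson-Erdős part of the Müntz-Szász Theorem says that $\{v^\mu\}$ spans only a thin (infinite-codimensional) space of analytic functions for the finite measure $\d v/\sqrt{v(1-v)}$, so its annihilator is large. Producing inside this annihilator a density $k$ that is genuinely $\cc^N$ (rather than merely $L^q$, or a point mass as would occur if the series diverged) is the main obstacle; this is exactly where the thinness $\sum1/\lambda<+\infty$ is indispensable, and I would extract such a $k$ by solving the (sparse, triangular) moment conditions on the Chebyshev/cosine coefficients of $h$ while keeping them rapidly decreasing.

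Finally I would reconstruct $f$ from $E_2$ coefficientwise. Demanding that $f$ be odd about $\theta_1$ with even part $E_2$ about $\theta_2$ forces $c_0(f)=0$ and, for $k\neq0$, $c_k(f)=\dfrac{2\,c_k(E_2)}{1-e^{4i\pi k\theta}}$ with $\theta=\theta_1-\theta_2$; one checks (using $c_{-k}(E_2)=c_k(E_2)e^{4i\pi k\theta_2}$) that this indeed returns a function odd about $\theta_1$ whose even part about $\theta_2$ is $E_2$. The denominators are the small divisors of Lemma~\ref{lem:decom}: $a$-approximability gives $|1-e^{4i\pi k\theta}|=2|\sin2\pi k\theta|\gtrsim|k|^{-a}$ for all large $k$, whence $|c_k(f)|\lesssim|k|^a|c_k(E_2)|$. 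Taking $h$, hence $E_2$, in $\cc^N$ with $N>a+1$ makes $c_k(f)=O(|k|^{a-N})$ summable, so $f$ is continuous and therefore lies in $X_{p'}(0,1)$ for every $p\in[1,+\infty]$, and $f\neq0$ because $E_2\neq0$. By construction $f$ annihilates both families, contradicting totality. I expect the two hypotheses to enter precisely here: $a$-approximability is what keeps the reconstruction inside $X_{p'}$ (for a Liouville $\theta$ the divisors would defeat any fixed smoothness, which is why the general conjecture remains open), while the convergence of a sub-series is what furnishes the smooth annihilator $E_2$.
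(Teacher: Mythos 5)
Your strategy is genuinely different from the paper's. The paper works on the primal side: it takes a smooth, zero-mean, $1/2$-periodic $f$ that fails to be analytic at five points, splits it as $f_1+f_2$ with $f_\ell$ even about $\theta_\ell$ via the small-divisor Lemma \ref{lem:decom}, and gets a contradiction from Lemma \ref{lem:analytic}, which forces each $f_\ell$ to be analytic off at most two points. You work on the dual side, constructing an explicit nonzero annihilator that is odd about $\theta_1$ and whose even part about $\theta_2$ is a prescribed smooth M\"untz annihilator $E_2$. Both proofs consume the hypotheses in the same way --- $a$-approximability tames the divisors $1-e^{4i\pi k\theta}$ in passing between $f$ and its symmetric parts, and the convergence of a sub-series supplies the obstruction (analyticity of the closed span for the paper, a smooth annihilator for you). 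Your reconstruction $c_k(f)=2c_k(E_2)/(1-e^{4i\pi k\theta})$ is correct: it does return a function odd about $\theta_1$ whose even part about $\theta_2$ is $E_2$, continuous once $E_2\in\cc^N$ with $N>a+1$, hence in $X_{p'}$ for every $p$, nonzero, and orthogonal to both families. What your route buys is a genuine dual witness of non-totality (and no need for the paper's delicate claim that $f_1\in\tt_{p,\Lambda,\theta_1}$); what it costs is the construction below.

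The gap is in the step you yourself flag as the main obstacle: producing a nonzero \emph{even} $h\in\cc^N[-1,1]$ with $\int_{-1}^1 h(u)u^\lambda(1-u^2)^{-1/2}\,\mathrm{d}u=0$ for all $\lambda\in\{0\}\cup\Lambda_e$. The mechanism you sketch --- solving the ``sparse, triangular'' moment system on the Chebyshev coefficients of $h$ --- does not work as stated. Since $u^\lambda=2^{1-\lambda}\sum\binom{\lambda}{(\lambda-n)/2}T_n(u)$ (sum over $n\le\lambda$ with $n\equiv\lambda \bmod 2$, the $n=0$ term halved), the $\lambda$-th condition reads $\sum_{n\le\lambda}\binom{\lambda}{(\lambda-n)/2}\widehat h(n)=0$; solving for the top coefficient $\widehat h(\lambda)$, whose weight is $1$, in terms of the lower ones, whose weights grow like $\binom{\lambda}{\lambda/2}\sim 2^{\lambda}/\sqrt{\lambda}$, destroys any prescribed decay, so the triangular solve does not keep the coefficients rapidly decreasing. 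The existence of $\cc^N$ annihilators when $\sum 1/\lambda<\infty$ is nonetheless a classical fact --- it is the dual face of the Clarkson--Erd\H{o}s--Schwartz analyticity that the paper invokes through Lemma \ref{lem:analytic} --- but it must be obtained differently, e.g.\ by inverting a Mellin/Laplace transform equal to a convergent Blaschke-type product over the exponents times a factor $(s+c)^{-m}$, which yields arbitrary finite smoothness together with high-order vanishing at the endpoints; the latter is needed so that passing to $h(u)=k(u^2)$ and dividing by the weight $\sqrt{v(1-v)}$ loses no regularity at $u=0,\pm1$. With that substitution your argument closes. Finally, note that both your proposal and the paper set aside the degenerate case $0\notin\Lambda$ with both sub-series divergent; as you observe, this is exactly where $\cc$ and $L^p$ part ways, and it needs a separate (and not entirely innocent) discussion.
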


\begin{proof}
If $\Lambda$ is not a $[-1,1]$-M\"untz-Sz\'asz sequence, then at least one of the series 
$\sum_{\lambda\in\Lambda_e}\lambda^{-1}$ or $\sum_{\lambda\in\Lambda_o}\lambda^{-1}$
diverges, say the first one.

Let $j>a+1/2$ and $f$ be a $\cc^j$ $1$-periodic function with zero-mean and assume that $f$
is not analytic in at least $5$ points. Assume further that $c_k(f)=0$ when $k$ is odd.

Write $f=f_1+f_2$ be the decomposition given by Lemma \ref{lem:decom}. Note that, from the remark following its proof,
$f_1,f_2$ satisfy $f_\ell(\theta_\ell+1/2-t)=f_\ell(\theta_\ell+t)$, $\ell=1,2$.

Now, as functions in $\tt_{p,\Lambda,\theta_2}$ satisfy $\ffi(\theta_2-t)=\ffi(\theta_2+t)$, and
$f_1(\theta_1-t)=f_1(\theta_1+t)$, necessarily, $f_1\in \tt_{p,\Lambda,\theta_1}$ since otherwise $f_1$ would be constant.
Similarly, $f_2\in \tt_{p,\Lambda,\theta_2}$.
From Lemma \ref{lem:analytic} we therefore know that $f_1,f_2$ are analytic except possibly at two points each.
Finally $f=f_1+f_2$ is analytic except at $4$ points, a contradiction.
\end{proof}

\section{A link with Heisenberg Uniqueness Pairs}

The original idea behind this paper comes from an other problem, namely the notion
of Heisenberg Uniqueness Pairs recently introduced by Hedenmalm and Montes-Rodr\'iguez \cite{HMR}
and further investigated {\it e.g.} in \cite{JK,Le,Sj}:

\begin{definition} 
Let $\Lambda\subset\R^2$ and $\Gamma$ a smooth curve. Then
$(\Gamma,\Lambda)$ is a \emph{Heisenberg Uniqueness Pair} if the only finite measure
$\mu$ that is supported on $\Gamma$, absolutely continuous with respect to arc length on $\Gamma$ and such that
$\widehat{\mu}\Big|_{\Lambda}=0$ is the measure $\mu=0$.
\end{definition}

Take for instance $\Gamma=\{(\cos2\pi t,\sin2\pi t),\ t\in[0,1)\}$ to be the unit circle and
$\Lambda$ to be a set of two lines through the origin, $\Lambda=\{(t\cos\theta_1,t\sin\theta_1),\ t\in\R\}\cup
\{(t\cos 2\pi\theta_1,t\sin2\pi\theta_1),\ t\in\R\}$, $\theta_1\not=\theta_2\in[0,1)$.

Then Lev \cite{Le} and Sj\"olin \cite{Sj} independently proved that $(\Gamma,\Lambda)$ is a  Heisenberg Uniqueness Pair
if and only if $\theta_1-\theta_2$ is irrational.

But, a measure $\mu$ that is supported on $\Gamma$ and absolutely continuous with respect to arc length on $\Gamma$ is determined
by a density $f\in L^1(0,1)$ in the following way
$$
\scal{\mu,\ffi}=\int_0^1\ffi(\cos2\pi t,\sin2\pi t)f(t)\,\mbox{d}t\qquad f\in\cc(\R^2).
$$
In particular, $\dst\widehat{\mu}(\eta,\xi)=\int_0^1 f(t)e^{2i\pi (\eta\cos2\pi t+\xi\sin2\pi t)}\,\mbox{d}t$
so that $\widehat{\mu}=0$ on $\Lambda$ means that
$$
\int_0^1 f(t)e^{2i\pi r\cos2\pi (t-\theta_1)}\,\mbox{d}t=
\int_0^1 f(t)e^{2i\pi r\cos2\pi (t-\theta_2)}\,\mbox{d}t=0,\quad r\in\R.
$$
In particular, differentiating $\lambda$ times with respect to $r$ at $r=0$ leads to
\begin{equation}
\label{eq:last}
\int_0^1 f(t)\cos^\lambda2\pi (t-\theta_1)\,\mbox{d}t=
\int_0^1 f(t)\cos^\lambda2\pi (t-\theta_2)\,\mbox{d}t=0
\end{equation}
for each $\lambda\in\Lambda$.
From Theorem \ref{th:mz2cos}, if \eqref{eq:last} holds for every $\lambda$ in some $[-1,1]$-M\"untz-Sz\'asz set,
then $f=0$. Note that the fact that $f\in L^1$ and the fact that the circle $\Gamma$ is compact makes it
easy to justify all computations.

The first author's paper \cite{JK} contains many more examples of compact curves $\Gamma$ and pairs of lines
$\Lambda$ such that $(\Gamma,\Lambda)$ is a Heisenberg Uniqueness Pair. Each such example
leads to new pairs of functions for which the sufficient part of our M\"untz-Sz\'asz Theorem
\ref{th:mz2cos} holds. However, the converse seems much more difficult to establish.

\section*{Acknowledgments}
The first author kindly acknowledge financial support from the French ANR program, ANR-12-BS01-0001 (Aventures),
the Austrian-French AMADEUS project 35598VB - ChargeDisq, the French-Tunisian CMCU/UTIQUE project 32701UB Popart.
This study has been carried out with financial support from the French State, managed
by the French National Research Agency (ANR) in the frame of the Investments for
the Future Program IdEx Bordeaux - CPU (ANR-10-IDEX-03-02).

\end{document}